\documentclass[final]{amsart}
\usepackage[utf8]{inputenc}
\usepackage[T1]{fontenc}
\usepackage{graphicx}
\usepackage{grffile}
\usepackage{longtable}
\usepackage{wrapfig}
\usepackage{rotating}
\usepackage[normalem]{ulem}
\usepackage{amsmath}
\usepackage{textcomp}
\usepackage{amssymb}
\usepackage{capt-of}
\usepackage[colorlinks=true,linkcolor=black,citecolor=black,urlcolor=blue]{hyperref}

\newcommand{\reg}{\operatorname{reg}}
\newcommand{\der}{\operatorname{der}}
\newcommand{\sreg}{\operatorname{\Sigma-reg}}

\newcommand{\git}{/\!\!/}
\newcommand{\Ch}{\mathrm{Ch}}
\newcommand{\tm}{\mathrm{tame}}
\usepackage{amsmath}
\usepackage{amssymb}
\usepackage{amsthm}
\usepackage{amscd}
\usepackage{mathtools}
\usepackage{marginnote}
\usepackage{todonotes}
\usepackage{bbm}
\usepackage{bm}
\usepackage[notcite,notref]{showkeys}
\usepackage{cite}
\usepackage[utf8]{inputenc}
\usepackage{tikz-cd}
\usepackage[colorlinks=true,linkcolor=black,citecolor=black,urlcolor=blue]{hyperref} 
\newtheorem{theorem}{Theorem}[section]
\newtheorem*{theorem*}{Theorem}

\newtheorem*{conjecture*}{Conjecture}

\newtheorem*{question*}{Question}

\newtheorem*{guess*}{Guess}

\newtheorem*{problem*}{Problem}

\newtheorem{lemma}[theorem]{Lemma}
\newtheorem*{lemma*}{Lemma}

\newtheorem*{exercise*}{Exercise}
\newtheorem{proposition}[theorem]{Proposition}
\newtheorem*{proposition*}{Proposition}

\newtheorem{corollary}[theorem]{Corollary}
\newtheorem*{corollary*}{Corollary}

\theoremstyle{definition}

\newtheorem{definition}[theorem]{Definition}
\newtheorem*{definition*}{Definition}
\newtheorem{remark}[theorem]{Remark}

\newtheorem*{example*}{Example}
\newtheorem*{examples*}{Examples}

\newcommand{\twomat}[4]{\begin{pmatrix} #1 & #2 \\ #3 & #4 \end{pmatrix}}

\newcommand{\pres}[1]{\left\langle #1 \right\rangle}

\renewcommand{\bar}{\overline}
\usepackage{mathtools}


\renewcommand{\AA}{\mathbb{A}}

\newcommand{\FF}{\mathbb{F}}
\newcommand{\F}{\mathbb{F}}
\newcommand{\GG}{\mathbb{G}}

\newcommand{\Q}{\mathbb{Q}}

\newcommand{\ZZ}{\mathbb{Z}}
\newcommand{\Z}{\mathbb{Z}}


\newcommand{\Cc}{\mathcal{C}}

\newcommand{\Oc}{\mathcal{O}}

\newcommand{\Tc}{\mathcal{T}}


\newcommand{\If}{\mathfrak{I}}

\newcommand{\Xf}{\mathfrak{X}}

\newcommand{\Zf}{\mathfrak{Z}}


\newcommand{\into}{\hookrightarrow}
\newcommand{\isomto}{\xrightarrow{\sim}}



\newcommand{\Aut}{\operatorname{Aut}}

\newcommand{\Lie}{\operatorname{Lie}}

\newcommand{\Gal}{\operatorname{Gal}}

\newcommand{\Ind}{\operatorname{Ind}}

\newcommand{\Spec}{\operatorname{Spec}}

\newcommand{\ord}{\operatorname{ord}}

\newcommand{\ch}{\operatorname{char}}
\newcommand{\Fr}{\operatorname{Fr}}

\newcommand{\ad}{\operatorname{ad}}

\newcommand{\im}{\operatorname{im}}

\newcommand{\LG}{{}^LG}
\newcommand{\GSp}{\mathrm{GSp}}

\DeclareMathOperator{\Ad}{Ad}


\author{Jack Shotton}
\date{\today}
\title{Irreducible components of the moduli space of Langlands parameters}
\hypersetup{
 pdfauthor={Jack Shotton},
 pdftitle={},
 pdfkeywords={},
 pdfsubject={},
 pdflang={English}}
\begin{document}

\begin{abstract}
    Let $F/\mathbb{Q}_p$ be finite, $G$ be an $L$-group, and let $\mathfrak{X}_G$ be the moduli space of Langlands
    parameters $W_F \to G$, in characteristic distinct from $p$. First, we determine the irreducible components of
    $\Xf_G$. Then, we determine the local structure around tamely ramified points for which the image of inertia
    is regular. This local structure is related to the endomorphism rings of Gelfand--Graev representations, by work of
    Li. Lastly, we determine an open dense set in $\Xf_M$, when $M$ is a Levi subgroup of $G$, such that the natural map
    of moduli stacks $[\Xf_M/M^\circ] \to [\Xf_G/G^\circ]$ is smooth on this set.
\end{abstract}

\maketitle
\section{Introduction}
\label{sec:intro}

Let $p$ be a prime and let $F$ be a finite extension of $\Q_p$ with Weil group $W_F$. Let \[G = G^\circ \rtimes \Gamma\]
with $G^\circ$ a split reductive group scheme\footnote{Recall that these are defined to have connected fibres.} over
$\Z[1/p]$ and $\Gamma$ a finite quotient of $W_F$ equipped with an action on $G^\circ$ by automorphisms preserving a
split Borel pair $(B,T)$.  Dat, Helm, Kurinczuk, and Moss \cite{datModuliLanglandsParameters2020} have shown that the
functor on $\Z[1/p]$-algebras
\[R \mapsto \{\text{$L$-homomorphisms $W_F^0 \to G(R)$}\}\] is representable by a scheme $\Xf_{G}$, locally of finite
type and flat over $\Z[1/p]$ --- here $W_F^0$ is a certain choice of discretization of $W_F$.\footnote{In
  \cite{datModuliLanglandsParameters2020}, they write $\LG$ for our $G$ and $\hat{G}$ for our $G^\circ$. This is more in
  keeping with the usual notation in the Langlands program, but leads to a proliferation of `$L$'s and hats that we
  prefer to avoid.}

For $L$ a field of characteristic zero, $\Xf_{G, L}$ is known to be generically reduced by work of
Bellovin-Gee \cite{bellovinGvaluedLocalDeformation2019} and so it has a smooth open dense subset (which may be made explicit). For
$L$ of characteristic $l$, this is far from being true, even for $G = GL_1$. However, one can still hope for a nice
description at an open dense set of points. For $G = GL_n$, in the tame case, we did this in
\cite{shottonGenericLocalDeformation2022}, finding a local description of $\Xf_{G}$ around a dense subset of its fibre
at each prime $l$. This description turns out to be related to the endomorphism algebra of the (integral) Gelfand-Graev
representation, and we applied this to the $l\neq p$ ``Breuil--M\'{e}zard'' conjecture.

Our aim here is to extend the geometric part of that work to general groups $G$. We do not quite succeed. Roughly, we
can only deal with components on which the $L$-parameter factors through a Levi subgroup in which `inertia is
regular'. For these components, we obtain a description similar to that for $GL_n$, again related to the endomorphism
algebras of Gelfand-Graev representations by work of Li \cite{liEndomorphismAlgebrasGelfandGraev2023}. For $G = GL_n$
this is everything, but for general $G$ there are components which fall outside our description. A limitation is that we
restrict to $G$ unramified; however, as in section~5.2 of \cite{datModuliLanglandsParameters2020}, in many situations
one may reduce to this case. (We also require $G^\circ$ to have smooth centre and simply connected derived subgroup).

The first task, of independent interest, is the determination of the geometrically irreducible components of $\Xf_{G}$
and its reductions modulo primes $l\ne p$. The (geometric) \emph{connected} components of $\Xf_{G}$, as well as of its
base changes to $\Z_l$ (equivalently, $\F_l$) for primes $l\ne p$, are determined in
\cite{datModuliLanglandsParameters2020}. The determination of the irreducible components takes a similar form, and was
explained to us some time ago by David Helm; we have written down the argument in section~\ref{sec:moduli} below.

We now outline the contents of this paper in more detail. First, the determination of irreducible components.  For $G$
an algebraic group acting on a scheme $X$, let $G_x$ denote the stabiliser of a point $x \in X(R)$ (it is a closed
subscheme of $G_R$).

\begin{theorem*}[Corollary~\ref{cor:irr-cpts}] Let $L$ be an algebraically closed field of characteristic distinct from
  $p$. There is a bijection between the set of irreducible components of $\Xf_{G, L}$ and the set of
  $G^\circ(L)$-conjugacy classes of pairs $(\eta, \alpha)$ where:
  \begin{itemize}
  \item $\eta$ is a continuous\footnote{For the discrete topology on $L$.} $L$-homomorphism $I^0_F \to G(L)$
    over $L$ that extends to $W^0_F$; and
  \item $\alpha$ is a component of the $(G^\circ)_\eta$-torsor of extensions of $\eta$ to $W^0_F$.
  \end{itemize}
\end{theorem*}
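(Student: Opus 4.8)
The plan is to analyse $\Xf_{G,L}$ through restriction to inertia and then read off its irreducible components from the resulting bundle structure. I would introduce the restriction morphism $r\colon \Xf_{G,L}\to\Xf_{I,L}$, where $\Xf_{I,L}$ is the moduli space of continuous $L$-homomorphisms $I_F^0\to G$ (constructed exactly as $\Xf_G$ in \cite{datModuliLanglandsParameters2020}). For a continuous $L$-homomorphism $\eta\colon I_F^0\to G(L)$ write $(G^\circ)_\eta=Z_{G^\circ}(\eta)$ for its scheme-theoretic centraliser in $G^\circ_L$, let $\mathcal{O}_\eta\subseteq\Xf_{I,L}$ be the (locally closed) $G^\circ$-orbit of $[\eta]$, and set $Y_\eta\defeq r^{-1}(\mathcal{O}_\eta)$, a $G^\circ$-stable locally closed subscheme of $\Xf_{G,L}$. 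Writing $W_F^0$ as generated by $I_F^0$ and a Frobenius lift $\varphi$, the scheme-theoretic fibre $\Xf^\eta\defeq r^{-1}([\eta])$ is the scheme of $\Phi\in G$ lying over the Frobenius coset in $\Gamma$ with $\Phi\,\eta(\sigma)\,\Phi^{-1}=\eta(\varphi\sigma\varphi^{-1})$ for all $\sigma$; when non-empty it is a torsor under $(G^\circ)_\eta$ for right translation, and $Y_\eta\cong G^\circ\times^{(G^\circ)_\eta}\Xf^\eta$, the associated bundle for the conjugation action of $(G^\circ)_\eta$ on $\Xf^\eta$. The hypothesis that $\eta$ extends to $W_F$ — equivalently, by \cite{datModuliLanglandsParameters2020}, to $W_F^0$ — says exactly that $\Xf^\eta$ has an $L$-point, so $\Xf^\eta$ is the trivial $(G^\circ)_\eta$-torsor; in particular $\dim\Xf^\eta=\dim(G^\circ)_\eta$, and $Y_\eta$ is equidimensional of dimension $\dim G^\circ$.

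Next I would compute the irreducible components of $Y_\eta$. A connected group scheme of finite type over a field is irreducible, so $G^\circ$ is irreducible and $\Xf^\eta\cong(G^\circ)_\eta$ as schemes; hence the irreducible components of $G^\circ\times\Xf^\eta$ biject with $\pi_0\bigl((G^\circ)_\eta\bigr)$. The quotient $G^\circ\times\Xf^\eta\to Y_\eta$ is an $(G^\circ)_\eta$-torsor, so irreducible components descend: those of $Y_\eta$ are the orbits of $(G^\circ)_\eta$ — equivalently of $\pi_0\bigl((G^\circ)_\eta\bigr)$, the identity component acting trivially — on $\pi_0(\Xf^\eta)$, the action being induced by $\Phi\mapsto h\Phi h^{-1}$ (a twisted conjugation once a base point is chosen to trivialise the torsor). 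The key observation is that this is precisely the action governing conjugacy of pairs: conjugating $(\eta,\alpha)$ by $g\in G^\circ(L)$ moves $\eta$ within $\mathcal{O}_\eta(L)$ and sends $\alpha\in\pi_0(\Xf^\eta)$ along $\Phi\mapsto g\Phi g^{-1}$, so the $G^\circ(L)$-conjugacy classes of pairs with first entry conjugate to $\eta$ are exactly the $(G^\circ)_\eta(L)$-orbits on $\pi_0(\Xf^\eta)$, hence the irreducible components of $Y_\eta$. Thus $(\eta,\alpha)\mapsto$ (corresponding component of $Y_\eta$) is a bijection from $G^\circ(L)$-conjugacy classes of pairs onto $\bigsqcup_{[\eta]}\{\text{irreducible components of }Y_\eta\}$.

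It then remains to show $D\mapsto\overline{D}$ identifies $\bigsqcup_{[\eta]}\{\text{components of }Y_\eta\}$ with the irreducible components of $\Xf_{G,L}$. Each $D$ is irreducible, locally closed in $\Xf_{G,L}$, and of dimension $\dim G^\circ$, so $\overline{D}$ is irreducible of that dimension. For surjectivity, the $L$-points of $\Xf_{G,L}$ are dense ($L$ being algebraically closed), each lies in $Y_\eta$ for $\eta$ its restriction to $I_F^0$, and the family $\{Y_\eta\}_{[\eta]}$ is locally finite — this is the rigidity of inertial parameters in characteristic $\neq p$: the Frobenius relation confines an $\eta$ of bounded level to finitely many conjugacy classes (its restriction to wild inertia is semisimple as $\ell\neq p$, and the prime-to-$p$ part of tame inertia maps to an element whose semisimple part is a root of unity of bounded order, using $\ell\neq p$ and that $G^\circ$ is reductive). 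Hence $\bigcup_{[\eta]}Y_\eta$ is dense and locally constructible, so it contains every generic point of $\Xf_{G,L}$; thus every irreducible component is $\overline{D}$ for some $D$, and $\Xf_{G,L}$ is equidimensional of dimension $\dim G^\circ$, whence conversely each $\overline{D}$ is a component. For injectivity, if $\overline{D}=\overline{D'}$ then $D$ and $D'$, being open dense in the common closure, share a generic point and so meet; but $Y_\eta\cap Y_{\eta'}=\emptyset$ when $\eta\not\sim\eta'$ (distinct $G^\circ$-orbits in $\Xf_{I,L}$), so $\eta\sim\eta'$ and then $D=D'$ within $Y_\eta$. Composing the two bijections proves the statement.

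The main obstacle is this local finiteness — the rigidity of inertial parameters away from $p$, which is also where the running hypotheses on $G$ (reductive with connected fibres, so that centralisers and component groups are well-behaved and connected group schemes over a field are irreducible) enter — together with the bookkeeping identifying the $(G^\circ)_\eta$-action on $\pi_0(\Xf^\eta)$ with conjugacy of pairs. Once this inertial stratification is in hand the remainder, as sketched above, is essentially formal.
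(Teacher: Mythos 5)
Your argument is correct, and its core coincides with the paper's: you stratify $\Xf_{G,L}$ by the restriction to $I_F^0$, identify the stratum over the orbit of $\eta$ with the associated bundle $G^\circ\times^{(G^\circ)_\eta}(\text{torsor of extensions of }\eta)$, and deduce that its connected components are irreducible, locally closed, of dimension $\dim G^\circ$, indexed by the $\pi_0((G^\circ)_\eta)$-orbits (equivalently, twisted conjugacy classes) on the components of the torsor --- this is exactly the content and proof of Theorem~\ref{thm:irr-cpts-eta}. The genuine divergence is in the globalization step. You show that the union of all strata contains every generic point via a local-finiteness and constructibility argument, which obliges you to prove that only finitely many inertial classes $\eta$ occur on each quasi-compact (or connected) piece of $\Xf_{G,L}$; this is true, but it is the one ingredient you only sketch (finitely many conjugacy classes of homomorphisms from the finite $p$-group $P_F/P_F^d$ because $p$ is invertible in $L$, boundedness of the order of the semisimple part of $\eta(\sigma)$ coming from $\Fr\sigma\Fr^{-1}=\sigma^q$, and finiteness of the number of unipotent classes), and spelling it out essentially reproduces finiteness arguments of \cite{datModuliLanglandsParameters2020}. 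The paper's proof of Corollary~\ref{cor:irr-cpts} sidesteps this entirely: for each irreducible component $\Cc$ it chooses an $L$-point $\rho$ lying on $\Cc$ and on no other component (possible since closed points are $L$-points and the components form a locally finite family), sets $\eta=\rho|_{I_F^0}$, and notes that the closure of the stratum component through $\rho$ is an irreducible component containing $\rho$, hence equals $\Cc$; uniqueness of $\eta$ up to conjugacy follows since a dense open subset of $\Cc$ has inertial restriction conjugate to $\eta$. So your route buys a derivation of equidimensionality directly from the stratification, at the price of the finiteness input, while the paper's choice of a point lying on a single component makes surjectivity and uniqueness immediate; the resulting bijection is the same.
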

Note that two pairs $(\eta, \alpha)$ and $(\eta, \alpha')$ are $G^\circ(L)$-conjugate if and only if $\alpha$ and
$\alpha'$ are conjugate under $\pi_0((G^\circ)_\eta)$. 

The critical case (by the `tame reduction' of \cite{datModuliLanglandsParameters2020} sections~3 and~4.5 and the
discussion following Lemma~5.7) is when the action of $W_F$ on $G^\circ$ is unramified and $\eta$ is tamely ramified,
so determined by an element $\Sigma \in G^\circ(L)$. In this case the irreducible components of type $\Sigma$
are in bijection with twisted conjugacy classes of the component group of the centralizer of $\Sigma$ in $G^\circ$.

Assume, therefore, that $\Gamma$ corresponds to an unramified extension of $F$, and define $\Xf^{\tm}_{G}$ to be the moduli space of
\emph{tamely ramified} $L$-parameters. We have a presentation
\[W_F^0/P_F \cong \pres{\Fr, \sigma : \Fr \sigma \Fr^{-1} = \sigma^q} \] where $q$ is the order of the residue field of
$F$. Define $\Xf_{G}^{\sreg}$ to be the open subset of parameters $\rho$ where $\Sigma = \rho(\sigma) \in G^\circ$ is
regular. Let $T$ be a maximal split torus in $G^\circ$ preserved by the action of $W_F$. Let $T\git W$ be the GIT
quotient of $T$ by the action of the Weyl group $W$. There is a morphism, the Steinberg morphism,
\[\Ch : G^\circ \to G^\circ\git G^\circ  \isomto T\git W.\]
Let
\[(T\git W)^{\Fr^{-1}[q]}\] be the closed subscheme of $T\git W$ fixed by $\Fr^{-1}[q]$, where $[q]$ is the $q$th power
map. 
\begin{theorem*}[Theorem~\ref{thm:sreg}] Suppose that $G^\circ_{\der}$ is simply connected and that the centre
  of $G^{\circ}$ is smooth. Then the morphism
  \[\Xf_{G}^{\sreg} \to (T\git W)^{\Fr^{-1}[q]}\]
  sending $\rho$ to $\Ch(\rho(\sigma))$ is smooth and surjective.
\end{theorem*}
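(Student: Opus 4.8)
The plan is to exhibit the morphism $\Xf_G^{\sreg} \to (T\git W)^{\Fr^{-1}[q]}$, at least étale-locally on the source, as a composite of manifestly smooth maps, by using the regularity of $\Sigma = \rho(\sigma)$ to rigidify the parameter. Recall that a tamely ramified parameter $\rho$ is determined by the pair $(\Sigma, \Phi) = (\rho(\sigma), \rho(\Fr)) \in G^\circ \times G$ subject to the single relation $\Phi \Sigma \Phi^{-1} = \Sigma^q$. Thus $\Xf_G^{\tm}$ is the closed subscheme of $G^\circ \times G$ cut out by this equation, and $\Xf_G^{\sreg}$ is the open locus where $\Sigma$ is regular. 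The first step is to record the structure of the regular locus $G^{\circ,\reg}$: since $G^\circ_{\der}$ is simply connected and $Z_{G^\circ}$ is smooth, the Chevalley morphism $\Ch\colon G^\circ \to T\git W$ is smooth on $G^{\circ,\reg}$ (this is the classical Steinberg result under exactly these hypotheses), and its fibres are single regular conjugacy classes, each isomorphic to $G^\circ/Z(\Sigma)$ with $Z(\Sigma)$ the (smooth, of minimal dimension) centralizer.

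The core of the argument is then to analyze the map $\mu\colon \Xf_G^{\sreg} \to G^{\circ,\reg}$, $\rho \mapsto \Sigma$, and compose with $\Ch$. I would argue that $\mu$ is smooth: given $\Sigma$ regular, the fibre $\mu^{-1}(\Sigma)$ is the scheme of $\Phi \in G$ with $\Phi \Sigma \Phi^{-1} = \Sigma^q$; this is a torsor under $Z(\Sigma)$ whenever nonempty, hence smooth of dimension $\dim Z(\Sigma)$, and one checks the relative tangent space computation — controlled by $H^0$ and $H^1$ of the relevant complex $[\,\mathrm{ad}\Sigma\,]$ acting on $\Lie G$ — is uniform over the regular locus precisely because $\dim Z(\Sigma)$ is constant there. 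Concretely, the relation map $G^\circ \times G \to G^\circ$, $(\Sigma,\Phi)\mapsto \Sigma^{-q}\Phi\Sigma\Phi^{-1}$, should be shown to be smooth in a neighbourhood of $\Xf_G^{\sreg}$ (equivalently its differential is surjective there), which gives that $\Xf_G^{\sreg}$ is smooth over $G^{\circ,\reg}$ via the projection; then composing with the smooth map $\Ch|_{G^{\circ,\reg}}$ would finish it — except one must land in the fixed subscheme $(T\git W)^{\Fr^{-1}[q]}$. Here the relation $\Phi\Sigma\Phi^{-1}=\Sigma^q$ forces $\Ch(\Sigma) = \Ch(\Sigma^q)$; and the Frobenius-semilinearity of $\Phi$ (it lies in $G$, not $G^\circ$, and $\Fr$ acts on $G^\circ$) identifies $\Ch(\Sigma^q)$ with $\Fr^{-1}[q]\bigl(\Ch(\Sigma)\bigr)$, so the image genuinely lies in the claimed closed subscheme, and one needs that the factored map $G^{\circ,\reg} \to (T\git W)^{\Fr^{-1}[q]}$ is still smooth — i.e. that $\Ch$ identifies $G^{\circ,\reg}$ with an open of the preimage of the fixed locus, which follows from smoothness of $\Ch$ together with a dimension/flatness bookkeeping on $(T\git W)^{\Fr^{-1}[q]}$ itself.

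There is a subtlety I am glossing that deserves care: $G$ is disconnected ($G = G^\circ \rtimes \Gamma$), so $\Phi$ ranges over a possibly-empty union of $G^\circ$-cosets, and on each nonempty coset the fibre of $\mu$ is a $Z(\Sigma)$-torsor only after twisting the conjugation action by the component of $\Gamma$ through which $\Phi$ maps. This is harmless for smoothness — twisted conjugation by a fixed element followed by $q$-th power is still smooth where the centralizer is smooth of constant dimension — but it is exactly why the target is the \emph{twisted} fixed locus $(T\git W)^{\Fr^{-1}[q]}$ rather than $(T\git W)^{[q]}$, and the semilinear identification above must be done component-by-component. The main obstacle, and the step I expect to require the most work, is verifying the uniform surjectivity of the differential of the relation map along all of $\Xf_G^{\sreg}$ over $\Z[1/p]$ — in particular in small residue characteristic $l$, where one cannot invoke characteristic-zero genericity and must instead leverage the smoothness of $\Ch$ on $G^{\circ,\reg}$ (which already encodes the simply-connected-derived and smooth-centre hypotheses) together with a careful Lie-algebra cohomology computation showing $H^1$ of the Frobenius-twisted adjoint action vanishes, or is at least of the expected rank, when $\Sigma$ is regular. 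I would isolate this as a separate lemma on the fibres of $\Xf_G \to G^\circ$ before assembling the final smoothness statement by the chain of maps described.
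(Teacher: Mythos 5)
Your overall architecture has a genuine flaw: the map $\mu\colon \Xf_G^{\sreg}\to G^{\circ,\reg}$, $\rho\mapsto\Sigma_\rho$, is not smooth, and the differential of the relation map is not surjective along $\Xf_G^{\sreg}$. The image of $\mu$ is contained in the proper closed subscheme $\Zf^{\reg}:=G^{\circ,\reg}\times_{A_G}B_G$ of those regular $\Sigma$ with $\Ch({}^{\Fr}\Sigma)=\Ch(\Sigma^q)$ (here $A_G=T\git W$ and $B_G=A_G^{\Fr^{-1}[q]}$); away from this locus the fibre of $\mu$ --- your ``$Z(\Sigma)$-torsor whenever nonempty'' --- is empty, so $\mu$ is not even dominant, whereas a smooth morphism has open image. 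The same inconsistency surfaces at the end of your argument: a composite of smooth maps $\Xf_G^{\sreg}\to G^{\circ,\reg}\to T\git W$ would be smooth, yet it factors through the closed subscheme $B_G$; and your proposed repair --- that $\Ch$ identifies $G^{\circ,\reg}$ with an open subset of the preimage of the fixed locus --- is false, since $\Ch|_{G^{\circ,\reg}}$ is fibrewise surjective onto $T\git W$ and only its restriction to $\Zf^{\reg}$ lands in $B_G$. Finally, the lemma you isolate (uniform surjectivity of the differential of $(\Sigma,\Phi)\mapsto\Sigma^{-q}\Phi\,{}^{\Fr}\Sigma\,\Phi^{-1}$ along $\Xf_G^{\sreg}$) would force $\Xf_G^{\sreg}$ to be smooth over $\Lambda$, which is exactly what fails in the cases of interest: already for $G=\GG_m$ every element is regular, the differential of the relation map on Lie algebras is multiplication by $1-q$, and $\Xf^{\tm}_G\cong\mu_{q-1}\times\GG_m$ is not smooth over $\Z_l$ when $l\mid q-1$; the $H^1$ you hope to kill does not vanish there. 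The theorem is a statement of smoothness over the (generally non-smooth) base $B_G$, not of absolute smoothness, so no tangent-space argument relative to $G^{\circ,\reg}$ or to $\Lambda$ can establish it.

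What is needed, and what the paper does, is to relativize before invoking your (correct) torsor and centralizer observations. One factors $p_\Sigma$ through $\Zf^{\reg}$; the projection $\Zf^{\reg}\to B_G$ is smooth, being a base change of $\Ch|_{G^{\circ,\reg}}$. The real content is smoothness of $\Xf_G^{\sreg}\to\Zf^{\reg}$, which the paper obtains by exhibiting it as a base change, along $\Sigma\mapsto({}^{\Fr}\Sigma,\Sigma^q)$, of the conjugation morphism $c\colon G^\circ\times G^{\circ,\reg}\to G^{\circ,\reg}\times_{A_G}G^{\circ,\reg}$, $(\gamma,g)\mapsto(g,\gamma g\gamma^{-1})$, and proving $c$ smooth via the fibrewise criterion over $A_G$ (using flatness of $\Ch$), the fact that geometric fibres of $\Ch$ on the regular locus are single $G^\circ$-orbits, miracle flatness, and Cotner's theorem that centralizers of regular elements are smooth when $G^\circ_{\der}$ is simply connected and $Z_{G^\circ}$ is smooth. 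Note that this last input is a genuinely nontrivial recent result in small residue characteristic, not the ``classical Steinberg'' fact you invoke; the hypotheses of the theorem enter precisely there and in Theorem~\ref{thm:sigma-reg}, not through any vanishing of $H^1$ for the twisted adjoint action.
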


\begin{remark} \label{rmk:endomorphism} Suppose that $G$ is the $L$-group of an unramified group with smooth integral
  model $\GG/\Oc_F$ and that $G^\circ_{\der}$ is simply connected. In this case, the coordinate ring of
  $(T\git W)^{\Fr^{-1}[q]}$ has been shown in \cite{liEndomorphismAlgebrasGelfandGraev2023} in characteristic not
  dividing $|W|$, and in \cite{lishottonEndomorphismAlgebrasGelfand} in the case of good characteristic, to be
  isomorphic to the endomorphism ring of the Gelfand--Graev representation of $\GG(k_F)$. This is an inertial shadow of
  the (conjectural) local Langlands correspondence in families.
\end{remark}

\begin{remark}
  Under suitable assumptions on $G$ (as in \cite{xiaoVectorvaluedTwistedConjugate2019} Remark~4.3.4) we might expect
  a similar result when $G$ is tamely ramified. See Remark~\ref{rmk:ram} below.
\end{remark}
\begin{remark}
  In \cite{datModuliLanglandsParameters2020} Proposition~5.12 one finds conditions under which points of the special
  fibre with $\Sigma$ regular unipotent are smooth (at least for $l > h$, where $h$ is the Coxeter number of $G$). The
  condition takes the form $l \nmid \chi_G(q)$ where $\chi_G$ is a certain product of cyclotomic polynomials depending
  on $G$. Our result provides a clean description of $\Xf_G$ at such points when $l \mid \chi_G(q)$, as long as
  $G^\circ_{\der}$ is simply connected.
\end{remark}

Finally, we can go beyond the $\Sigma$-regular locus with the following result, for which we can drop the assumption
that $G$ is unramified. By a standard Levi subgroup of $\LG$ we mean a subgroup of the form $M = M^\circ \rtimes \Gamma$
where $M^\circ$ is a $\Gamma$-stable standard Levi subgroup of $G^\circ$ (relative to our fixed split maximal torus).
\begin{theorem*}[Theorem~\ref{thm:levi}]
  Suppose that $M$ is a standard Levi subgroup of $G$. Consider the morphism
  \[c: G^\circ \times\Xf_{M} \to \Xf_{G}\] sending $(g, \rho)$ to $g \rho g^{-1}$. Then there is an open subset
  $U \subset \Xf_{M}$ such that $c|_{U}$ is smooth and $U$ intersects each fibre of $\Xf_{M} \to \Spec\Z[1/p]$ in a
  dense open subset.
\end{theorem*}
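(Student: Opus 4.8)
The plan is to combine the infinitesimal criterion for smoothness with the classification of irreducible components (Corollary~\ref{cor:irr-cpts}); the real content is a cohomological genericity statement.

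First I would make two reductions. Since $c(gh,\rho)=g\,c(h,\rho)\,g^{-1}$ and conjugation by $g\in G^\circ$ is an automorphism of $\Xf_G$, the locus where $c$ is smooth is stable under left translation on the $G^\circ$-factor; being open, it has the form $G^\circ\times U$ for an open $U\subseteq\Xf_M$. By the fibrewise criterion for smoothness (applicable because $\Xf_M$ and $\Xf_G$ are flat and of finite type over $\Z[1/p]$, \cite{datModuliLanglandsParameters2020}), $U$ meets the geometric fibre $\Xf_{M,L}$ over each point of $\Spec\Z[1/p]$ precisely in the smooth locus of $c_L\colon G^\circ_L\times\Xf_{M,L}\to\Xf_{G,L}$. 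So it suffices to prove: for every algebraically closed field $L$ of characteristic $\ne p$ (allowing characteristic $0$) and every irreducible component $\mathcal Z$ of $\Xf_{M,L}$, there is $\rho\in\mathcal Z(L)$ at which $c$ is smooth.

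Next I would establish a deformation-theoretic criterion: writing $\mathfrak g=\Lie G^\circ$ and $\mathfrak m=\Lie M^\circ$, regarded as $W_F^0$-modules via $\Ad\circ\rho$, the morphism $c$ is smooth at $(1,\rho)$ whenever $H^1(W_F^0,\mathfrak g/\mathfrak m)=0$. One applies the lifting criterion: given a square-zero extension $A\twoheadrightarrow A_0$ of Artinian local $L$-algebras with kernel $I$, a point $(g_0,\rho_{M,0})\in(G^\circ\times\Xf_M)(A_0)$, and a lift $\rho_G\in\Xf_G(A)$ of $g_0\rho_{M,0}g_0^{-1}$, one must lift $(g_0,\rho_{M,0})$. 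Lifting $g_0$ arbitrarily to $g\in G^\circ(A)$ (possible as $G^\circ$ is smooth) and replacing $\rho_G$ by $g^{-1}\rho_G g$, the task becomes: conjugate the given $G(A)$-valued lift of the $M(A_0)$-valued homomorphism $\rho_{M,0}$, by an element of $1+\mathfrak g\otimes I$, into an $M(A)$-valued lift. Because $\Xf_M$ and $\Xf_G$ are local complete intersections with the expected presentations \cite{datModuliLanglandsParameters2020}, their deformations have tangent spaces $Z^1(W_F^0,-)$ and obstructions in $H^2(W_F^0,-)$, functorially in the group; so the obstruction to an $M(A)$-valued lift lies in $H^2(W_F^0,\mathfrak m)\otimes I$ and maps to the already-vanishing obstruction in $H^2(W_F^0,\mathfrak g)\otimes I$. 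From the long exact sequence of $0\to\mathfrak m\to\mathfrak g\to\mathfrak g/\mathfrak m\to0$, the hypothesis $H^1(W_F^0,\mathfrak g/\mathfrak m)=0$ gives both $H^2(W_F^0,\mathfrak m)\hookrightarrow H^2(W_F^0,\mathfrak g)$ (so an $M(A)$-valued lift $\rho_M$ exists) and $H^1(W_F^0,\mathfrak m)\twoheadrightarrow H^1(W_F^0,\mathfrak g)$ (so the class in $H^1(W_F^0,\mathfrak g)\otimes I$ measuring the difference of $\rho_M$ and $\rho_G$ is annihilated by modifying $\rho_M$ within its $Z^1(W_F^0,\mathfrak m)\otimes I$-torsor and conjugating). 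This produces the required lift, hence formal smoothness of $\hat{\Oc}_{\Xf_{G,L},\rho}\to\hat{\Oc}_{G^\circ\times\Xf_{M,L},(1,\rho)}$, hence smoothness of $c$ at $(1,\rho)$.

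It remains to find, in each component $\mathcal Z$, a point $\rho$ with $H^1(W_F^0,\mathfrak g/\mathfrak m)=0$. Let $P=M^\circ N$ be the standard parabolic with Levi $M^\circ$, with $\mathfrak n=\Lie N=\bigoplus_{\beta\in\Phi_N}\mathfrak g_\beta$ and $\bar{\mathfrak n}$ its opposite. Since $\rho$ is valued in $M=M^\circ\rtimes\Gamma$, which normalises both $\mathfrak n$ and $\bar{\mathfrak n}$, we have $\mathfrak g/\mathfrak m\cong\mathfrak n\oplus\bar{\mathfrak n}$ as $W_F^0$-modules, and $\bar{\mathfrak n}\cong\mathfrak n^\vee$ via the Killing form. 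The Euler characteristic of $W_F^0$-cohomology vanishes (from the presentation $W_F^0/P_F=\pres{\Fr,\sigma:\Fr\sigma\Fr^{-1}=\sigma^q}$ together with $\ell\ne p$) and local duality gives $H^2(W_F^0,V)\cong H^0(W_F^0,V^\vee(1))^\vee$; hence $H^1(W_F^0,\mathfrak g/\mathfrak m)=0$ follows once $\Ad(\rho(\Fr))$ has no eigenvalue in $\{1,q,q^{-1}\}$ on $\mathfrak n$. By Corollary~\ref{cor:irr-cpts}, a generic $\rho\in\mathcal Z$ has $\rho|_{I^0_F}$ equal to a fixed homomorphism $\eta$ and $\rho(\Fr)$ a freely varying point $F$ of a fixed connected component $\alpha$ of the $(M^\circ)_\eta$-torsor of extensions of $\eta$ to $W_F^0$. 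The identity component $Z$ of the centre of $M^\circ$ lies in $(M^\circ)_\eta$, acts on this torsor by translation preserving $\alpha$, and --- crucially, because $M$ is a Levi, so $\Phi_M=\Phi\cap\operatorname{span}(\Phi_M)$ --- every $\beta\in\Phi_N$ restricts non-trivially to $Z$. Writing $F=zF_0$ and noting $\Ad(zF_0)|_{\mathfrak n}=D(z)\cdot\Ad(F_0)|_{\mathfrak n}$ with $D(z)$ the diagonal matrix with entries $\beta(z)$ ($\beta\in\Phi_N$), the regular function $z\mapsto\det\bigl(\Ad(zF_0)|_{\mathfrak n}-\zeta\bigr)$ on $Z$ is non-zero for each $\zeta\in\{1,q,q^{-1}\}$: along a cocharacter $\lambda$ of $Z$ with $\langle\beta,\lambda\rangle>0$ for all $\beta\in\Phi_N$ it is asymptotic to $\det\bigl(\Ad(F_0)|_{\mathfrak n}\bigr)\,t^{\langle 2\rho_N,\lambda\rangle}$ as $t\to\infty$, with $\langle 2\rho_N,\lambda\rangle=\sum_{\beta\in\Phi_N}\langle\beta,\lambda\rangle>0$. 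Thus the bad locus is proper closed in $\mathcal Z$, and for $\rho$ in a dense open subset of $\mathcal Z$ the eigenvalue condition --- hence smoothness of $c$ at $(1,\rho)$ --- holds.

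The main difficulty is this last step, and it is where the Levi hypothesis is indispensable: only then does the central torus $Z$ move the Frobenius eigenvalues on $\mathfrak n$ throughout $\GG_m$, keeping the forbidden loci proper on \emph{every} component. A secondary point requiring care is that the wild part and the $\sigma$-value of $\rho$, constant along $\mathcal Z$, do not interfere --- they only shrink the space of $W_F^0$-invariants further, so the eigenvalue bound on $\mathfrak n$ still suffices. (In characteristic $0$ one could instead combine generic smoothness with Bellovin--Gee \cite{bellovinGvaluedLocalDeformation2019}, but the argument above is uniform in the residue characteristic.)
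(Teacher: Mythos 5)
Your proposal is essentially correct, but it proves the theorem by a genuinely different route from the paper. The paper's argument contains no Galois cohomology: it defines an explicit ``avoidant'' open locus $\Xf_M^{a}\subset\Xf_M$ (eigenvalue conditions on $\ad_{\rho(\Fr)}$ on $\Lie(U)\oplus\Lie(U^-)$, plus an extra characteristic-polynomial condition), proves fibrewise density by exactly your central-twisting trick, and then proves smoothness globally in two steps: (i) the group-level conjugation map $G^\circ\times M^{a}\to G$ is smooth by a tangent-space computation ($Dc(X,Y)=(\ad_m^{-1}-1)X+Y$), and (ii) --- the key point you replace --- the locus $Z\subset\Xf_G$ of parameters with $\rho(\Fr)$ avoidant is shown to coincide with $\Xf_M^{a}$ itself (flatness and reducedness reduce this to characteristic zero, where the extra avoidance condition and Lemma~\ref{lem:ab-conjugate} force $\rho(\sigma)$ and the wild part into $M$). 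This yields more than the stated theorem: an explicit $U$ and the \'etaleness of the descended map $\bar c$. Your pointwise deformation-theoretic criterion ($H^1(W_F^0,\mathfrak{g}/\mathfrak{m})=0$ implies smoothness at $(1,\rho)$, via injectivity of $H^2(\mathfrak{m})\to H^2(\mathfrak{g})$ and surjectivity of $H^1(\mathfrak{m})\to H^1(\mathfrak{g})$) is sound and avoids the paper's third avoidance condition, the auxiliary integer $r$, and the characteristic-zero descent; in exchange you must import the Euler-characteristic and duality statements for the cohomology controlling deformations on $\Xf_G$ (these are available in \cite{datModuliLanglandsParameters2020}, but you should cite them rather than assert them, since $W_F^0$ is not the finitely presented group $\pres{\Fr,\sigma}$), and you obtain only the weaker statement quoted in the introduction. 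Note also that your eigenvalue condition (no eigenvalue of $\Ad\rho(\Fr)$ in $\{1,q,q^{-1}\}$ on $\mathfrak{n}$) is precisely the first clause of the paper's avoidance condition.

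One step needs repair. You assert that the identity component $Z$ of $Z(M^\circ)$ lies in $(M^\circ)_\eta$ and translates the torsor of extensions; this fails in general, because this section does not assume $G$ unramified, so the image of inertia in $\Gamma$ may act nontrivially on $Z(M^\circ)^\circ$, and only inertia-fixed central elements centralize $\eta$ (equivalently, only for such $z$ is $z\rho(\Fr)$ again a parameter). The fix is the one the paper makes: twist by the torus $S\subset Z(M^\circ)^\circ\cap G^\circ_{\der}$ of \cite{borelAutomorphicFunctions1979} Lemma~3.5, on which $W_F^0$ acts trivially and whose weights on $\Lie(U)$ are positive powers of a single character (alternatively, average your positive cocharacter over $\Gamma$, which permutes the roots in $\Lie(U)$). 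A minor further point: the identification $\bar{\mathfrak{n}}\cong\mathfrak{n}^\vee$ via the Killing form is unnecessary and delicate for reductive groups in small characteristic; it is cleaner to run the eigenvalue argument on $\mathfrak{n}$ and $\bar{\mathfrak{n}}$ separately.
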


\begin{remark}
  The theorem is in the spirit of Lemma~5.14 of \cite{datModuliLanglandsParameters2020}, which achieves a similar
  reduction for their study of unobstructedness.
\end{remark}

For $G = GL_n$, the image of \begin{equation}\label{eq:union-levi}\bigcup_{M} \im(G^\circ \times \Xf^{\sreg}_{M} \to
  \Xf^{\tm}_{G}),\end{equation} where $M$ runs over conjugacy classes of standard Levi subgroups, is fibrewise dense in
$\Xf_{G}$ (see \cite{shottonGenericLocalDeformation2022}). In general this is not true; there are two issues coming
from the disconnectedness of component groups and the existence of non-regular distinguished unipotent conjugacy
classes. We give examples in section~\ref{sec:examples}.

In \cite{shottonGenericLocalDeformation2022} we were able to use this and the desciption of the coordinate ring of
$(T\git W)^{[q]}$ as the endomorphism ring of a Gelfand--Graev representation\footnote{In fact, we just need the weaker
  result after $\otimes \bar{\Q}$.} to give a local proof of the author's ``$l \ne p$ Breuil--M\'{e}zard conjecture'' in
the tame case. It should be possible to do something similar for unramified groups, but only for modular $L$-parameters
which lie only on components in the image of~\eqref{eq:union-levi}. Given this serious restriction, we will not discuss this
point further here.

\subsection{Acknowledgments}
\label{sec:acknowledgments}

We thank David Helm for explaining to the author the results of section~\ref{sec:moduli}, and Tzu-Jan Li for helpful
conversations concerning the significance of $(T\git W)^{\Fr^{-1}[q]}$. We thank Sean Cotner for pointing out a gap in
the proof of Corollary 3.8, and helping to fix it.  Finally, we thank an anonymous referee for a thorough reading that
greatly improved this article.

\section{The moduli space of parameters}
\label{sec:moduli}

\subsection{Notation}\label{sec:notation} As in the introduction, let $F/\Q_p$ be a finite extension with ring of integers
$\Oc_F$ and residue field $k_F$ of order $q$. Let $W_F$ be the Weil group of $F$ with inertia group $I_F$ and wild
inertia group $P_F$. We choose a lift $\Fr \in W_F$ of arithmetic Frobenius and a lift $\sigma \in I_F$ of a topological
generator of $I_F/P_F$ in such a way that $\Fr \sigma \Fr^{-1} = \sigma^q$ (this can be done by
\cite{iwasawaGaloisGroupsLocal1955} Theorem 2).  Let $W_F^0$ be the
subgroup of $W_F$ generated by $\sigma, \Fr$, and $P_F$; its intersection with $I_F$ is denoted
$I_F^0$. We have that $I_F^0/P_F \cong \Z[1/p]$ via an isomorphism sending $\sigma$ to $1$. We topologise $W_F^0$ such that the
cosets of open subsets of $P_F$ (with its profinite topology) form a base for the topology on $W_F^0$.

Let $G$ be an algebraic group over $\Z[1/p]$ of the form $G^\circ \rtimes \Gamma$ where: $G^\circ$ is a split reductive
group scheme over $\Z[1/p]$, $\Gamma$ is a finite quotient of $W_F$, and $\Gamma$ acts on $G^\circ$ via automorphisms
preserving a split Borel pair $(B,T)$. We write the action of $W_F$ on $G^\circ$ using left superscripts:
$(w, g) \mapsto {}^wg$. Finally, we let $(X^*(T), \Delta, X_*(T), \Delta^\vee)$ be the root datum associated to
$G^\circ$, together with its action of $W_F$.

If $H$ is a subgroup of $W_F$ and $R$ is a $\Z[1/p]$-algebra, then an $L$-homomorphism $H \to G(R)$ is a continuous (for the
discrete topology on $G(R)$) homomorphism
\[\rho : H \to G(R)\]
such that the composite of $\rho$ with $G(R) \to \Gamma$ agrees with $H \to W_F \to \Gamma$ coming from the given
surjection. For $h \in H$, we write $\rho(h) = (\rho^\circ(h), h)$ (slightly loosely using the same letter $h$ for an
element of $H$ and its image in $\Gamma$). If $H\subset H'$ then by an extension of $\rho$ to $H'$ we will mean an
$L$-homomorphism $\tilde{\rho}: H' \to G(R)$ whose restriction to $H'$ is $\rho$. If $\gamma \in \Gamma$ then we will
write
\[G^\circ \rtimes \gamma = \{(g, \gamma) : g \in G^\circ\}\]
for the corresponding connected component of $G$.

When $G$ acts on a scheme $X$ and $\eta \in X(S)$, we write $G_\eta$ for the stabiliser group scheme (defined over
$S$). We adopt the convention that $G^\circ_\eta = (G^\circ)_\eta$, and not $(G_\eta)^\circ$.

If $X \to S$ and $T \to S$ are morphisms of schemes, then we will sometimes write $X_T = X \times_S T$ (particularly
when $T$ is a point or geometric point of $S$). If $A$ is a $\Z[1/p]$-algebra and $X$ is a scheme over $\Spec \Z[1/p]$
then we will write $X_A$ for $X \times_{\Spec \Z[1/p]} \Spec A$, \emph{reserving this notation for this situation}.

\subsection{The moduli space of $L$-parameters}
\label{sec:L-params}

One of the main results of \cite{datModuliLanglandsParameters2020} is the following.
\begin{theorem}[Dat--Helm--Kurinczuk--Moss] \label{thm:dhkm}The functor sending a $\Z[1/p]$-algebra $R$ to the set of
  $L$-homomorphisms $W_F^0 \to G(R)$ is representable by a scheme $\Xf_G$ over $\Z[1/p]$. Moreover, $\Xf_G$ has the
  following properties:
  \begin{enumerate}
  \item The morphism $\Xf_G \to \Spec \Z[1/p]$ is syntomic (flat, locally of finite presentation, with fibres that are
    local complete intersections) of relative dimension $\dim_{\Z[1/p]} G$, the relative dimension of $G$ over
    $\Z[1/p]$.
  \item If $L$ is a field of characteristic zero then $\Xf_G \times_{\Spec \Z[1/p]} \Spec L$ is reduced.\footnote{This
      part relies crucially on prior work of Bellovin-Gee \cite{bellovinGvaluedLocalDeformation2019}.}
  \item The connected components of $\Xf_G$ are finitely presented over $\Z[1/p]$.
  \end{enumerate}
\end{theorem}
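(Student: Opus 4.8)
The plan is to build $\Xf_G$ from finite-level pieces, to reduce property~(1) to the unobstructedness of the wild part of $W_F^0$ (which holds because $p$ is invertible) together with the one-relator structure of the tame quotient, and to deduce the characteristic-zero reducedness from the smoothness result of Bellovin--Gee and the Cohen--Macaulay property.

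First I would set up the finite levels. For an open subgroup $P' \subseteq P_F$ that is normal in $W_F$ and maps trivially to $\Gamma$, put $\Gamma_{P'} = W_F^0/P'$: it is an extension of $W_F^0/P_F \cong \pres{\Fr, \sigma : \Fr\sigma\Fr^{-1} = \sigma^q}$, a one-relator (Baumslag--Solitar) group, by the finite $p$-group $P_F/P'$, hence is a finitely presented discrete group. The functor of $L$-homomorphisms $W_F^0 \to G$ trivial on $P'$ is the functor of $L$-homomorphisms of $\Gamma_{P'}$; a finite presentation of $\Gamma_{P'}$ realises it as the closed subscheme $\Xf_G^{P'}$ of a finite power of $G$ cut out by the finitely many defining relations of $\Gamma_{P'}$ together with the clopen condition that the composite to $\Gamma$ be the prescribed homomorphism, so $\Xf_G^{P'}$ is of finite presentation over $\Lambda$. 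Any continuous $L$-homomorphism $W_F^0 \to G(R)$ is trivial on some such $P'$ (its restriction to the profinite group $P_F$ has open kernel, which may be shrunk to be normal in $W_F$ and to map trivially to $\Gamma$), so $\Xf_G = \varinjlim_{P'} \Xf_G^{P'}$. The transition maps $\Xf_G^{P'} \hookrightarrow \Xf_G^{P''}$ ($P'' \subseteq P'$) are closed immersions, and they are also \emph{open}: restriction to $P'/P''$ defines a morphism from $\Xf_G^{P''}$ to the scheme of homomorphisms $P'/P'' \to G^\circ$ whose fibre over the trivial homomorphism is $\Xf_G^{P'}$, and that scheme is étale over $\ZZ[1/p]$ near the trivial homomorphism --- the finite $p$-group $P'/P''$ has vanishing $H^1$ and $H^2$ with coefficients in any $\ZZ[1/p]$-module, both being killed by $|P'/P''|$ --- so the trivial homomorphism is an isolated point. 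Hence $\Xf_G$ is a filtered colimit along open--closed immersions: it is a scheme, locally of finite presentation over $\Lambda$, and each connected component of $\Xf_G$ is a connected component of some finitely presented $\Xf_G^{P'}$. This gives representability, local finite presentation, and~(3).

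The core is property~(1): it suffices to prove that each $\Xf_G^{P'}$ is flat over $\ZZ[1/p]$ and, Zariski-locally, cut out inside a smooth $\ZZ[1/p]$-scheme by a regular sequence of the expected length, so its fibres are local complete intersections of relative dimension $\dim G + 1$. I would proceed by dévissage along $1 \to Q \to \Gamma_{P'} \to \pres{\Fr, \sigma : \Fr\sigma\Fr^{-1} = \sigma^q} \to 1$ with $Q = P_F/P'$. Step~(a): $\Hom_L(Q, G)$ is smooth over $\ZZ[1/p]$, since $H^{\geq 2}(Q, \ad\rho) = 0$ for the finite $p$-group $Q$ over any $\ZZ[1/p]$-algebra (it is killed by $|Q|$, a power of $p$), so this deformation problem is unobstructed; this is exactly where $p$ invertible enters, and it is what renders the wild part harmless. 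Step~(b): adjoining the image of $\sigma$ presents $\Xf_G^{P'}|_{I_F^0}$ over $\Hom_L(Q, G)$ with fibres that are torsors, when nonempty, under a smooth twisted-centraliser group scheme, subject moreover to $\rho$ being defined on all of $I_F^0/P_F \cong \ZZ[1/p] = \varinjlim(\ZZ \xrightarrow{\cdot q} \ZZ \xrightarrow{\cdot q} \cdots)$, i.e.\ to $\rho(\sigma)$ lying in the nested images of the $q$-power maps --- a condition that is automatic over an algebraically closed field, since every element of $G$ is $p$-divisible there, and stabilises in general. Step~(c): adjoining the image of $\Fr$ presents $\Xf_G^{P'}$ over the previous stage with fibre the scheme of $f \in G$ with prescribed image in $\Gamma$ satisfying the Baumslag--Solitar relation $f\rho(\sigma)f^{-1} = \rho(\sigma)^q$, again a torsor under a smooth group scheme when nonempty. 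Assembling (a)--(c) exhibits $\Xf_G^{P'}$, Zariski-locally, as an iterated fibration of this kind over a smooth $\ZZ[1/p]$-scheme, each stage cut out by exactly the predicted number of equations, and an Euler-characteristic count for the discretised Weil group then yields flatness and l.c.i.\ fibres of the stated relative dimension. I expect this last bookkeeping to be the main obstacle: one must check that the twisted-conjugacy equations in (b) and~(c) form a regular sequence \emph{everywhere}, not merely at generic points --- this fails when $p$ is not invertible, where the wild part alone can already be badly non-flat, so the argument must use the structure of the extension rather than a crude count of generators and relations.

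Finally, for~(2), let $\Lambda$ be a field of characteristic zero. By Bellovin--Gee \cite{bellovinGvaluedLocalDeformation2019}, a dense open subscheme of $\Xf_{G,\Lambda}$ is smooth, so $\Xf_{G,\Lambda}$ is generically reduced; by~(1) it is a local complete intersection, hence Cohen--Macaulay, hence has no embedded associated primes, and a generically reduced Cohen--Macaulay scheme is reduced.
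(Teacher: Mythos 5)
This theorem is not proved in the paper at all --- it is imported verbatim from Dat--Helm--Kurinczuk--Moss --- so the comparison is with their argument. Your finite-level setup is sound and matches their strategy: exhausting $W_F^0$ by the finitely presented quotients $W_F^0/P'$, observing that the transition maps are open and closed because homomorphisms from the finite $p$-group $P'/P''$ are rigid over $\Z[1/p]$ (vanishing of $H^1$ and $H^2$), and deducing representability, local finite presentation and part~(3). Likewise your deduction of~(2) is the standard one: Bellovin--Gee give generic reducedness in characteristic zero, and a generically reduced local complete intersection (hence Cohen--Macaulay, hence without embedded primes) is reduced.

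The genuine gap is in part~(1), and it is exactly the step you flag but do not fill. Your dévissage rests on the claim that each stage is ``a torsor under a smooth twisted-centraliser group scheme when nonempty''; but (twisted) centralizers in $G^\circ$ are \emph{not} smooth in general in residue characteristic $\ell$ --- this is precisely why the present paper must assume $G^\circ_{\der}$ simply connected and $Z_{G^\circ}$ smooth and invoke Cotner's theorem (Proposition~\ref{prop:cotner}) to obtain smoothness of centralizers even of \emph{regular} elements. Moreover, even granting a torsor structure fibre by fibre, maps of the shape $\Phi \mapsto \Phi\,{}^{\Fr}\Sigma\,\Phi^{-1}\Sigma^{-q}$ are not flat: their fibre dimensions jump with $\Sigma$, so no iterated-fibration picture or Euler-characteristic count shows that the twisted-conjugacy equations form a regular sequence everywhere. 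What Dat--Helm--Kurinczuk--Moss actually prove, and what is absent from your sketch, is a uniform dimension bound: every geometric fibre of $\Xf_G \to \Spec \Z[1/p]$, in every residue characteristic $\ell \neq p$, has exactly the expected dimension; this requires a delicate analysis of twisted centralizers in the possibly disconnected group $G$ and is the technical heart of the theorem. Only with that equidimensionality in hand does the presentation by the expected number of equations inside a smooth $\Z[1/p]$-scheme yield the local complete intersection property and (by miracle flatness) flatness. A further structural wrinkle: splitting off $I_F^0$ before adjoining $\Fr$ forces you to discuss homomorphisms from $I_F^0/P_F \cong \Z[1/p]$, an infinite (pro-)condition that is not of finite type, and the appeal to $p$-divisibility of elements of $G$ over an algebraically closed field does not by itself produce such a homomorphism; in $W_F^0/P_F = \pres{\Fr, \sigma : \Fr\sigma\Fr^{-1} = \sigma^q}$ the divisibility of $\sigma$ is automatic from the relation, so the clean parametrization is by $(\rho|_{P_F}, \rho(\sigma), \rho(\Fr))$ subject to finitely many relations, bypassing your intermediate stage~(b) entirely.
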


If $\Lambda$ is a $\Z[1/p]$-algebra, then we write $\Xf_{G, \Lambda} = \Xf_{G} \times_{\Spec \Z[1/p]} \Spec
\Lambda$. Theorem~\ref{thm:dhkm} remains true with $\Z[1/p]$ replaced by $\Lambda$.

\subsection{Components}
\label{sec:connected}

Let $L$ be an algebraically closed field of characteristic $l$ distinct from $p$; we allow $l = 0$. We aim to
determine the irreducible components of $\Xf_G = \Xf_{G,L}$, after first recalling a result on the connected
components. 

\begin{theorem}[Dat--Helm--Kurinczuk--Moss]\label{thm:dhkm2} There are bijections between the sets of:
  \begin{enumerate}
  \item Connected components of $\Xf_{G, L}$; and
  \item $G^\circ(L)$-conjugacy classes of pairs $(\eta, \alpha)$ where $\eta : I_F \to G(L)$ is $G$-semisimple
    (see \cite{datModuliLanglandsParameters2020} Definition~4.12) and $\alpha$ is a connected component of the variety
    of extensions of $\eta$ to $W_F$.
  \end{enumerate}
  If $l > 0$ then these are in turn in bijection with the sets of connected components of $\Xf_{G, \bar{\F}_l}$ and of
  $\Xf_{G, W(\bar{\F}_l)}$.
\end{theorem}
\begin{proof} The bijection between the first two sets follows from \cite[Theorem 1.7]{datModuliLanglandsParameters2020}
  (see also \cite[Corollary~4.21]{datModuliLanglandsParameters2020} parts (2) and (5)).

  The final statement follows from \cite[Theorem~4.8]{datModuliLanglandsParameters2020}.
\end{proof}

\begin{remark}
  Since $\eta$ in (5) has finite image (since it is assumed to be continuous for the discrete topology on $L$), if $L$
  has characteristic zero then the $G$-semisimplicity condition is automatic.
\end{remark}

\emph{For the rest of this section, we base change everything to $L$ --- so $G = G_L$ etc.}

\begin{definition} An $L$-homomorphism $\eta : I_F^0 \to G(L)$ is \emph{admissible} if it extends to an $L$-homomorphism
  $W_F^0 \to G(L)$.
\end{definition}

Given an admissible $\eta$, we consider the affine $L$-scheme $Y_\eta$ defined on $L$-algebras $R$ by
\[Y_\eta(R) = \{\rho : W_F^0 \to G(R) \text{ extending $\eta$}\}.\] We write $\Sigma(\eta)$ for its set of connected
components (so we use the same notation as that for a similarly-defined set in \cite{datModuliLanglandsParameters2020}
section~3.2).

If we write $\Phi = \rho(\Fr)$ then
\[Y_\eta \cong \{\Phi \in G^\circ \rtimes \Fr : \Phi \eta(\gamma) \Phi^{-1} = \eta(\Fr\gamma\Fr^{-1}) \text{ for all
    $\gamma \in I_F^0$}\}\] and we see that $Y_\eta$ is a left $G^\circ_{\eta}$-torsor via left multiplication on
$\Phi$. If we fix $\rho_0$ extending $\eta$ with $\rho_0(\Fr) = \Phi_0$, then we can identify
$Y_\eta\cong G^\circ_{\eta}$, with a general point having $\rho(\Fr) = \phi\Phi_0$ for $\phi \in G^\circ_{\eta}$.

There is a second action of $G^\circ_{\eta}$ on $Y_\eta$ given by conjugation. On choosing $\rho_0$ as in the
previous paragraph, this may be identified with the action of $G^\circ_{\eta}$ on itself by $\Ad_{\Phi_0}$-twisted
conjugation.

If $\If$ is the moduli space of $L$-homomorphisms $I_F^0 \to G(R)$ (for $L$-algebras $R$), then $\If$ is a union of
connected components that are each affine of finite type over $L$. The $G^\circ$-orbit of $\eta \in \If(L)$,
$G^\circ \cdot \eta$, then naturally has the structure of a quasiaffine variety and, by
\cite{milneAlgebraicGroupsTheory2017} Corollary~7.13, there is an
isomorphism
\[G^\circ/G^\circ_\eta \isomto G^\circ \cdot \eta.\]

There is a natural morphism $p_I : \Xf_{G} \to \If$ and we write   
\[\Xf_{\eta} = p_{I}^{-1}(G^\circ\cdot \eta),\]
a locally closed subscheme of $\Xf_\eta$. Note that, by definition, we have
\[Y_\eta = p_I^{-1}(\eta).\]

Theorem~\ref{thm:irr-cpts-eta} and Corollary~\ref{cor:irr-cpts} below were explained to the author by David Helm.

\begin{theorem}\label{thm:irr-cpts-eta} For each admissible $L$-homomorphism $\eta : I_F^0 \to G(L)$, every connected
  component of $\Xf_\eta$ is irreducible, and its Zariski closure in $\Xf_G$ is an irreducible component of $\Xf_G$.

  If $\eta$ is chosen as above, then there is a bijection between:
  \begin{enumerate}
  \item The set of connected (or irreducible) components of $\Xf_\eta$;
  \item The set of $\pi_0(G_\eta^\circ)$-conjugacy classes in $\Sigma(\eta)$.
  \end{enumerate}
  If we choose a $\rho_0 : W^0_F \to G(L)$ extending $\eta$, then these are in bijection with the $\Ad_{\rho_0(\Fr)}$-twisted
  conjugacy classes in $\pi_0(G^\circ_\eta)$.
\end{theorem}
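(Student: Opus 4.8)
The plan is to analyze $\Xf_\eta$ via the stratification coming from the morphism $p_I$ and the torsor structure on $Y_\eta$. First I would note that $\Xf_\eta = p_I^{-1}(G^\circ \cdot \eta)$ fibres over the homogeneous space $G^\circ \cdot \eta \cong G^\circ/G^\circ_\eta$, and that the fibre over the base point $\eta$ is exactly $Y_\eta$. Since $G^\circ$ is a smooth connected group and $G^\circ/G^\circ_\eta$ is a single orbit, $\Xf_\eta$ is (fppf- or étale-locally on the base) isomorphic to $G^\circ \times^{G^\circ_\eta} Y_\eta$, the twisted product; concretely, the map $G^\circ \times Y_\eta \to \Xf_\eta$, $(g,\rho) \mapsto g\rho g^{-1}$, is faithfully flat with fibres the orbits of the $G^\circ_\eta$-action $h\cdot(g,\rho) = (gh^{-1}, h\rho h^{-1})$. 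Because $G^\circ$ is connected and the map $G^\circ \times Y_\eta \to \Xf_\eta$ is open (smooth, even, being a composite of the projection to $Y_\eta$ followed by this quotient presentation), the connected/irreducible components of $\Xf_\eta$ are the images of the $G^\circ$-translates of the connected components of $Y_\eta$; two components of $Y_\eta$ have the same image if and only if they are exchanged by an element of $G_\eta^\circ$ acting by conjugation. This gives a bijection between components of $\Xf_\eta$ and $G_\eta^\circ$-orbits on $\pi_0(Y_\eta) = \Sigma(\eta)$, and since $(G_\eta^\circ)^\circ$ acts trivially on $\pi_0$, these orbits are the same as $\pi_0(G_\eta^\circ)$-orbits, i.e. $\pi_0(G_\eta^\circ)$-conjugacy classes in $\Sigma(\eta)$. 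This establishes the bijection between (1) and (2).

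Next I would prove irreducibility of each connected component of $\Xf_\eta$. Using the presentation above, it suffices to show each connected component of $Y_\eta$ is irreducible, since $G^\circ$ is irreducible and smooth (irreducibility is preserved under the smooth surjection $G^\circ \times (\text{component of } Y_\eta) \to (\text{component of } \Xf_\eta)$, being an open map with irreducible source — actually one must be slightly careful as the quotient identifies points, but the image of an irreducible set is irreducible). For $Y_\eta$ itself: after choosing $\rho_0$ we identified $Y_\eta \cong G^\circ_\eta$ as a scheme (via $\Phi = \phi\Phi_0$). Now $G^\circ_\eta = (G_\eta)^\circ$ is the identity component of the stabiliser group scheme, hence is a smooth connected affine group scheme over $L$ (smoothness because we are over a field — actually $G_\eta$ need not be smooth in general, but its identity component, being a connected group scheme over a field that is reduced... here I would invoke that $G^\circ_\eta$ is by definition the reduced identity component, or note that we only need it over $L$ algebraically closed and use that a connected group scheme over an algebraically closed field is irreducible). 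In any case $G^\circ_\eta$ is irreducible, so each connected component of $Y_\eta$ — which is a $\pi_0(G_\eta)$-coset of $G^\circ_\eta$ inside $G_\eta$, or rather inside $Y_\eta$ identified with $G_\eta$ — is a translate of $G^\circ_\eta$ and therefore irreducible. Hence every connected component of $\Xf_\eta$ is irreducible.

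For the last claim, that the Zariski closure of each connected component of $\Xf_\eta$ is an irreducible component of $\Xf_G$: the key input is the Dat–Helm–Kurinczuk–Moss theorem that $\Xf_G$ is a local complete intersection, hence equidimensional of pure dimension $\dim G^\circ + 1$ on each connected component (over a field $L$, dropping the $\Spec\Z[1/p]$ direction; more precisely, equidimensional since l.c.i.\ schemes over a field are Cohen–Macaulay hence have no embedded components, and each connected component of the l.c.i.\ has pure dimension). It then suffices to check $\dim \Xf_\eta = \dim G^\circ + 1$. This is a dimension count: $\dim \Xf_\eta = \dim(G^\circ \cdot \eta) + \dim Y_\eta = (\dim G^\circ - \dim G^\circ_\eta) + \dim G^\circ_\eta = \dim G^\circ$. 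Hmm — this gives $\dim G^\circ$, one short. The resolution is that $\Xf_\eta$ here is the preimage of a single orbit in $\If$, but the full $\Xf_G$ includes the direction of deforming $\eta$ within its admissible locus, which near $\eta$ contributes exactly one more dimension (the orbit $G^\circ\cdot\eta$ sits inside a component of $\If$ whose dimension exceeds that of the orbit by $1$, corresponding to the $\Z[1/p]$-line $I_F^0/P_F$); so $\Xf_\eta$ is a codimension-$1$, or rather, $\Xf_\eta$ is itself not top-dimensional but its closure meets a top-dimensional component. I would instead argue directly: take a connected component $C$ of $\Xf_\eta$; it lies in a unique connected component $\widetilde{C}$ of $\Xf_G$; by the l.c.i.\ property $\widetilde C$ is equidimensional, and since $\Xf_G \to \If$ is flat with l.c.i.\ fibres one computes $\dim C = \dim(G^\circ\cdot\eta) + \dim Y_\eta$ while $\dim \widetilde C$ is governed by the component of $\If$ containing $\eta$; the point is that the admissible $\eta$ with this connected component of $\If$ form exactly the orbit $G^\circ\cdot\eta$ (admissibility is an orbit-closed, in fact orbit-constant, condition on each component of $\If$), so $\Xf_\eta$ is open and dense in $\widetilde C$, whence $\overline C$ is a full irreducible component.

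\emph{The main obstacle} I expect is precisely this last dimension-count, reconciling $\dim \Xf_\eta$ with the relative dimension $\dim G^\circ + 1$ of $\Xf_G$: one must identify correctly which component of the inertia moduli space $\If$ contains $\eta$, verify that admissibility (extendability to $W_F^0$) is constant on connected components of $\If$ so that $p_I^{-1}(\text{component})$ is literally $\overline{\Xf_\eta}$ for suitable $\eta$, and that $\Xf_\eta$ is open dense there — then flatness of $\Xf_G \to \If$ together with equidimensionality of fibres (the torsor $Y_\eta$) and of the base gives the result. The torsor/twisted-conjugacy bookkeeping and the passage from $\pi_0(Y_\eta)$ to $\Ad_{\rho_0(\Fr)}$-twisted conjugacy classes in $\pi_0(G^\circ_\eta)$ is then routine from the explicit identification $Y_\eta \cong G^\circ_\eta$ already set up in the text.
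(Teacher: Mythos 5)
Your presentation of $\Xf_\eta$ as $G^\circ \times^{G^\circ_\eta} Y_\eta$, the resulting bijection between components of $\Xf_\eta$ and orbits of the component group of the stabiliser on $\Sigma(\eta) = \pi_0(Y_\eta)$, and the irreducibility of each component of $Y_\eta$ (translates of the identity component of the stabiliser, after the identification $Y_\eta \cong (G^\circ)_\eta$ via a choice of $\rho_0$) all match the intended argument. The problem is in the last part, where you prove that the closure of a component of $\Xf_\eta$ is an irreducible component of $\Xf_G$. Your first instinct was right: $\dim \Xf_\eta = \dim(G^\circ\cdot\eta) + \dim Y_\eta = \dim G^\circ$, and this is \emph{not} ``one short'' --- the fibres of $\Xf_G \to \Spec \Lambda$ have dimension $\dim G^\circ$, not $\dim G^\circ + 1$ (check this for tame parameters of $\GL_1$ or on the unramified component, where $\rho$ is determined by $\rho(\Fr) \in G$). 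Since $\Xf_{G,L}$ is a flat local complete intersection over $L$ of constant relative dimension, every irreducible component of $\Xf_{G,L}$ has dimension $\dim G^\circ$; as $\Xf_\eta$ is locally closed in $\Xf_G$ of the same dimension, each of its (irreducible) connected components is open in an irreducible component of $\Xf_G$, and its closure is that component. That is the whole argument.

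The patch you substituted instead is false. You claim that admissibility is constant on connected components of $\If$, that the admissible points of the component of $\If$ containing $\eta$ form exactly the orbit $G^\circ\cdot\eta$, and hence that $\Xf_\eta$ is open and dense in its connected component $\widetilde C$ of $\Xf_G$. Already for $G = \GL_2$ over a field of characteristic zero this fails: the trivial $\eta$ and a regular unipotent $\eta$ are both admissible, lie in the same connected component of $\If$ (a unipotent can be deformed to the identity along a line), and give rise to distinct irreducible components (the unramified and Steinberg-type components) lying on the \emph{same} connected component of $\Xf_G$ --- indeed the whole point of the theorem is that connected components of $\Xf_G$ are generally unions of several $\Xf_\eta$'s for non-conjugate $\eta$. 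So $\Xf_\eta$ is not dense in $\widetilde C$, and an argument along those lines cannot work; you must return to the dimension count together with equidimensionality of $\Xf_{G,L}$.
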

\begin{remark} From the proof of the theorem we will see that $|\Xf_{\eta}|$ is an open subset of an irreducible
  component of $|\Xf_{G}|$. However, if $\ch L \ne 0$, then neither $\Xf_{\eta}$ nor $\Xf_{G}$ need be reduced, and the
  non-reduced structures will typically not agree.
\end{remark}
\begin{proof} Note that we have an isomorphism
  \begin{align*}
Y_\eta \times G^\circ & \cong \Xf_\eta \times_{G^\circ\cdot \eta} G^\circ \\
(\rho, g)& \mapsto (g\rho g^{-1}, g)
    \end{align*} by definition. Taking the quotient by
  $G^\circ_{\eta}$ and using that $G^\circ/G^\circ_{\eta} \cong G^\circ \cdot \eta$, we obtain an
  isomorphism
  \[\Xf_\eta \cong G^\circ \times^{G^\circ_\eta} Y_\eta .\]
  It follows that $\Xf_\eta$ is a disjoint union of irreducible connected components corresponding to the
  $G^\circ_\eta$-orbits in $\pi_0(Y_\eta) = \Sigma(\eta)$, which are the same as the $\pi_0(G_\eta^\circ)$-orbits.

  Each component of $Y_\eta$ has dimension $\dim G^\circ_\eta$ and so the dimension of every component of $\Xf_\eta$ is
  \[\dim G^\circ + \dim G^\circ_\eta - \dim G^\circ_\eta = \dim G^\circ = \dim \Xf_{G}.\]
  Since $G^\circ\cdot \eta$ is locally closed, the same is true for $\Xf_\eta$. It follows that the underlying
  topological space of each connected component of $\Xf_\eta$ is an open subset of an irreducible component of
  $\Xf_\eta$.
\end{proof}
\begin{corollary}\label{cor:irr-cpts}
  There are bijections between:
  \begin{enumerate}
  \item The set of irreducible components of $\Xf_{G, L}$;
  \item The set of $G^\circ(L)$-conjugacy classes of pairs $(\eta, C)$ where $\eta : I_F^0 \to G(L)$ is an admissible
    $L$-homomorphism and $C$ is a connected component of $\Sigma(\eta)$.
  \end{enumerate}
  Let $\Tc$ be a set of representatives of the $G^\circ(L)$-conjugacy classes $\eta$ as in~(2). For each $\eta \in \Tc$,
  choose an extension $\rho_\eta$ of $\eta$ to $W_F^0$.
  
  Then these sets are in bijection with the set
  \[\{\text{$(\eta, C)\; :\; \eta \in \Tc$ and $C$ is an $\Ad_{\rho_\eta(\Fr)}$-twisted conjugacy class of
      $\pi_0(G^\circ_{\eta})$}\}.\]
\end{corollary}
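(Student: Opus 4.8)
The plan is to deduce Corollary~\ref{cor:irr-cpts} from Theorem~\ref{thm:irr-cpts-eta} by stratifying $\Xf_G$ according to the restriction of a parameter to $I_F^0$, and then tracking how the indexing data transforms under $G^\circ$-conjugacy. First I would observe that, since every $L$-homomorphism $\rho : W_F^0 \to G(L)$ is continuous for the discrete topology, its restriction $\eta = \rho|_{I_F^0}$ is admissible, so the morphism $p_I : \Xf_G \to \If_L$ has image contained in the union of the locally closed subsets $G^\circ \cdot \eta$ as $\eta$ ranges over admissible homomorphisms. Concretely, $\If_L$ decomposes (set-theoretically) into the $G^\circ$-orbits $G^\circ \cdot \eta$, and these orbits are finite in number on each connected component of $\If_L$ because the stabilisers $G^\circ_\eta$ have finitely many components and the relevant homomorphisms have finite image; hence $\Xf_G$ is the disjoint union, over $G^\circ$-conjugacy classes of admissible $\eta$, of the locally closed subschemes $\Xf_\eta = p_I^{-1}(G^\circ \cdot \eta)$.

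Next I would invoke Theorem~\ref{thm:irr-cpts-eta}: for each admissible $\eta$, the Zariski closures of the connected components of $\Xf_\eta$ are exactly the irreducible components of $\Xf_G$ meeting $\Xf_\eta$, and these components are pairwise distinct for distinct $\eta$ within a fixed set $\Tc$ of conjugacy-class representatives, since the $\Xf_\eta$ are disjoint and each is open in its own closure (being locally closed of the same dimension as $\Xf_G$, as computed in the proof of the theorem). Thus the irreducible components of $\Xf_G$ are in bijection with pairs $(\eta, D)$ where $\eta \in \Tc$ and $D$ is a connected component of $\Xf_\eta$. By the second part of Theorem~\ref{thm:irr-cpts-eta}, the connected components of $\Xf_\eta$ are in bijection with the $\pi_0(G^\circ_\eta)$-orbits on $\Sigma(\eta)$, equivalently — via a chosen extension $\rho_\eta$ and the identification $Y_\eta \cong G^\circ_\eta$ — with the $\Ad_{\rho_\eta(\Fr)}$-twisted conjugacy classes in $\pi_0(G^\circ_\eta)$. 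This already gives the bijection between (1) and the explicit set at the end of the corollary.

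For the remaining identification of (1) with (2), I would check that the two ways of parametrising are genuinely the same datum: a connected component $C$ of $\Sigma(\eta)$ (viewed as an abstract set with $G^\circ_\eta$-action) is the same thing as a $G^\circ_\eta$-orbit on $\pi_0(Y_\eta)$, and since $(G^\circ_\eta)^\circ$ acts trivially on $\pi_0(Y_\eta)$ this coincides with a $\pi_0(G^\circ_\eta)$-orbit. So ``pairs $(\eta, C)$ up to $G^\circ$-conjugacy'' and ``pairs $(\eta, \text{component of }\Xf_\eta)$ with $\eta \in \Tc$'' carry the same information, provided one verifies that passing from $\eta$ to a $G^\circ$-conjugate $g\eta g^{-1}$ transports $\Sigma(\eta)$ equivariantly to $\Sigma(g\eta g^{-1})$ (conjugation by $g$ induces $Y_\eta \cong Y_{g\eta g^{-1}}$ and an isomorphism $G^\circ_\eta \cong G^\circ_{g\eta g^{-1}}$ intertwining the actions), so that the quotient sets match. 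The main obstacle — really the only subtle point — is this last bookkeeping: making sure that the $G^\circ$-conjugacy on pairs in (2) corresponds precisely to choosing one representative $\eta$ per orbit and then taking $\pi_0(G^\circ_\eta)$-orbits on $\Sigma(\eta)$, with no residual identifications from the normaliser of $\eta$ in $G^\circ$ beyond those already accounted for by $\pi_0(G^\circ_\eta)$; this follows because an element of $G^\circ$ fixing $\eta$ lies in $G_\eta$, and the extra component group $\pi_0(G_\eta)/\pi_0(G^\circ_\eta)$ permutes the connected components of $\Xf_\eta$ in a way already built into the ``$G^\circ$-conjugacy of pairs'' equivalence. I would make this explicit with a short orbit-counting argument, after which all three parametrisations coincide.
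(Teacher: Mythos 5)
Your overall strategy is the paper's: deduce everything from Theorem~\ref{thm:irr-cpts-eta}, get injectivity from the disjointness of the strata $\Xf_\eta$ together with the fact that each connected component of $\Xf_\eta$ is open and dense in its closure, and identify $\pi_0(G^\circ_\eta)$-orbits on $\Sigma(\eta)$ with $\Ad_{\rho_\eta(\Fr)}$-twisted conjugacy classes. However, the surjectivity half rests on a false intermediate claim: you assert that for each admissible $\eta$ the closures of the connected components of $\Xf_\eta$ are \emph{exactly} the irreducible components of $\Xf_G$ meeting $\Xf_\eta$. Theorem~\ref{thm:irr-cpts-eta} gives only one inclusion; the other direction fails, because an irreducible component $\Cc$ can meet $\Xf_\eta$ inside the overlap of $\Cc$ with the closure $\bar{D}$ of a component $D$ of $\Xf_\eta$, without $\Cc$ being the closure of any component of $\Xf_\eta$. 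For instance, for $G = GL_2$ with $q \equiv 1 \bmod l$, the closure of the regular-unipotent stratum contains points whose restriction to $I_F^0$ is trivial, so this component meets $\Xf_{\eta}$ for $\eta$ trivial; but it is not the closure of a component of that stratum (that closure is a different irreducible component, isomorphic to $GL_2$). Since your ``Thus the irreducible components of $\Xf_G$ are in bijection with pairs $(\eta,D)$'' is obtained by combining this claim with the covering statement, the surjectivity of the map $(\eta, D) \mapsto \bar{D}$ onto the set of irreducible components is not established as written.

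The repair is exactly the paper's (short) argument, which you should insert in place of the ``exactly'' claim: given an irreducible component $\Cc$, the set of points lying on no other component is open and nonempty, hence contains an $L$-point $\rho$ (closed points are $L$-points and are dense, as $\Xf_G$ is locally of finite type over the algebraically closed field $L$); setting $\eta = \rho|_{I_F^0}$, the connected component of $\Xf_\eta$ through $\rho$ has closure an irreducible component of $\Xf_G$ containing $\rho$, which is therefore $\Cc$. Uniqueness of the conjugacy class of $\eta$ then follows as you (and the paper) argue, since $\Cc$ has a dense open set of points whose inertial restriction is conjugate to $\eta$. Two further caveats on your first paragraph: admissible $\eta$ need not have finite image in characteristic zero (the unipotent part of $\eta(\sigma)$ can be nontrivial), so that justification of the finiteness of orbits fails; and neither that finiteness nor the full scheme-theoretic disjoint-union statement is needed --- the argument only requires that every $L$-point of $\Xf_G$ lie in some $\Xf_\eta$, which is immediate. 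Your final paragraph on the conjugation bookkeeping is correct and agrees with the remark following the theorem in the introduction.
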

\begin{proof}
  By Theorem~\ref{thm:irr-cpts-eta}, it is enough to prove that every irreducible component of $\Xf_G$
  arises as the closure of an irreducible component of some $\Xf_\eta$, for $\eta$ unique up to $G^\circ$-conjugacy.

  To this end, for each irreducible component $\Cc$ of $\Xf_{G}$, let $\rho$ be an $L$-point that lies only on that
  component and take $\eta = \rho|_{I_F^0}$. Then one of the components of $\Xf_\eta$ contains $\rho$, and its closure
  is an irreducible component of $\Xf_G$ containing $\rho$, which must therefore be equal to $\Cc$.

  Finally, to see that $\eta$ is unique, note that if $\Cc$ is the closure of a connected component of $\Xf_\eta$ then
  it has an open dense subset of points for which the restriction to $I_F^0$ is $G^\circ$-conjugate to $\eta$. This can
  clearly happen for only one $G^\circ$-conjugacy class of $\eta$.
\end{proof}

\section{The $\Sigma$-regular locus}
\label{sec:sreg}

\subsection{Regular elements and the Steinberg morphism}
\label{sec:sigma-reg-chevalley}

\emph{For this subsection only,} let $G$ be a split connected reductive group defined over a ring $\Lambda$. Let $T$
be a split maximal torus and $B \supset T$ a Borel subgroup. Let $W$ be the Weyl group of $G$ relative to $T$. Let
$r$ be the rank of $T$.

Consider the conjugation action of $G$ on itself, and denote the GIT quotient $\Spec \Lambda[G]^{G}$ by $G\git G$. We
also define 
\[T\git W = \Spec \Lambda[T]^{W} = \Spec \Lambda[X^*(T)]^W.\] 

\begin{theorem}\label{thm:chevalley}
  \begin{enumerate}
  \item The restriction morphism
    \[T\git W \to G \git G\] is an isomorphism.
  \item The formation of $T\git W$ and $G \git G$ is compatible with base change: for any $\Lambda$-algebra $\Lambda'$,
    \[G_{\Lambda'} \git G_{\Lambda'} \cong (G \git G)_{\Lambda'}\]
    and similarly for $T\git W$.
  \end{enumerate}
\end{theorem}
\begin{proof}
  \begin{enumerate}
  \item Over a field this is a theorem of \cite{steinbergRegularElementsSemisimple1965}. Over a general ring
    see \cite{leeAdjointQuotientsReductive2015}.
  \item This is \cite[Lemma~4.2]{leeAdjointQuotientsReductive2015} for $G \git G$, and
    \cite[Lemma~3.1]{leeAdjointQuotientsReductive2015} for $T \git W$.
  \end{enumerate}
\end{proof}

If $L$ is a field (and a $\Lambda$-algebra) then, for any $g \in G(L)$, its centraliser $G_g$ has dimension $\ge r$, and
we say that $g$ is regular if equality holds. Then
\[G^{\reg} = \{g \in G: \text{$g$ is regular in $G(\kappa(g))$}\}\] is an open subset of $G$ (by
semicontinuity, \cite[Proposition~4.1]{bertinGeneralitesPreschemasGroupes1965}).

\begin{theorem}\label{thm:sigma-reg} Suppose that $G_{\der}$ is simply connected. Then
  \begin{enumerate}
\item The natural morphism
  \[\Ch : G \to G\git G\]
  is faithfully flat, and its restriction to $G^{\reg}$ is smooth and surjective.
\item There is an isomorphism
  \[G\git G \cong \AA^r_\Lambda \times G/G_{\der}.\]
  \end{enumerate}
\end{theorem}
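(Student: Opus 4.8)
The plan is to handle the two assertions in tandem, since the decomposition in (2) is essentially what makes the smoothness in (1) transparent. First I would reduce to a statement about the Steinberg map by invoking Theorem~\ref{thm:chevalley} to replace $G\git G$ with $T\git W$ throughout. For (2), the key input is that $G_{\der}$ simply connected forces the coordinate ring $\Lambda[T]^W = \Lambda[X^*(T)]^W$ to be a polynomial ring over $\Lambda[X^*(T/T_{\der})] = \Lambda[Z]$, where I write $Z = G/G_{\der}$ for the cocharacter lattice quotient (a split torus times possibly $\AA^r$-factors): concretely, choosing a $\ZZ$-basis of fundamental coweights adapted to the simply connected hypothesis, the invariants $\Lambda[X^*(T)]^W$ are freely generated as a $\Lambda[X^*(Z)]$-algebra by $r$ elements (the ``fundamental characters'' $\chi_1,\dots,\chi_r$ attached to the fundamental dominant weights, which exist integrally precisely because the weight lattice equals the character lattice of $T$ when $G_{\der}$ is simply connected). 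This is Steinberg's theorem in the form proved integrally in \cite{leeAdjointQuotientsReductive2015}, and it gives $G\git G \cong \Spec \Lambda[Z][\chi_1,\dots,\chi_r] \cong \AA^r_\Lambda \times (G/G_{\der})$, which is (2). I expect the only subtlety here is bookkeeping with the center: one must check that $\Lambda[X^*(T)]^W$ really is free (not just smooth) over the subring generated by $W$-invariant characters that are trivial on $T_{\der}$, and cite the integral form carefully rather than the field case of Steinberg.

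For (1), faithful flatness of $\Ch$ follows once we know the target is (Zariski-locally) a polynomial ring over $\Lambda$ and the fibres all have the expected dimension $\dim G - r$: by ``miracle flatness'' (e.g. \cite{milneAlgebraicGroupsTheory2017}, or EGA IV 6.1.5), a morphism from a Cohen--Macaulay scheme to a regular scheme with equidimensional fibres of the correct dimension is flat, and here $G$ is smooth hence Cohen--Macaulay, $G\git G \cong \AA^r_\Lambda \times (G/G_{\der})$ is smooth over $\Lambda$, and every fibre of $\Ch$ over a geometric point is a union of (finitely many) conjugacy classes of the expected dimension by the theory of the Steinberg map over a field. Surjectivity on geometric points is classical (every semisimple class meets $T$), giving faithful flatness. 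For the smoothness of $\Ch|_{G^{\reg}}$, I would argue fibrewise over $\Lambda$, so reduce to $\Lambda = L$ an algebraically closed field: since we already have flatness, it suffices to show the geometric fibres of $\Ch|_{G^{\reg}}$ are smooth of dimension $\dim G - r$. The fibre over a point $c$ is $\{g \in G^{\reg} : \Ch(g) = c\}$; over a field, a classical result (Steinberg, and in bad characteristic one uses the simply connected hypothesis) says the fibre of $\Ch$ through a regular element is a single smooth conjugacy class, or more precisely that each regular element is a smooth point of its fibre — equivalently the differential $d\Ch_g$ is surjective at every regular $g$. The cleanest route is to compute $d\Ch_g$: identifying $T_g G \cong \gf$ via translation, $d\Ch_g$ has kernel containing the tangent space to the conjugacy class, which at a regular point has dimension exactly $\dim G - r$; hence $d\Ch_g$ is surjective onto the $r$-dimensional tangent space of $G\git G$, which combined with flatness gives smoothness of $\Ch|_{G^{\reg}}$ by the infinitesimal criterion.

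The main obstacle I anticipate is making the fibrewise tangent-space computation uniform across all residue characteristics, i.e. genuinely handling bad primes. Over $\QQ$ or in good characteristic everything is classical Steinberg theory, but the statement here is over an arbitrary ring $\Lambda$, so the argument must either (a) cite an integral smoothness statement for the Steinberg map directly from \cite{leeAdjointQuotientsReductive2015} or a comparable reference, or (b) check by hand that the regular locus, the freeness of invariants, and the surjectivity of $d\Ch_g$ all persist in characteristic $2,3,5$ for the exceptional types — the simply connected hypothesis on $G_{\der}$ is exactly what is needed to rescue these (it guarantees the existence of a regular \emph{unipotent} element and the polynomiality of invariants even in bad characteristic). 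I would take route (a) where possible, and otherwise reduce to the simple simply connected and torus factors, treating the torus part trivially and the semisimple simply connected part by the cited integral Steinberg theory.
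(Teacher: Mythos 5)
Your overall route is essentially the paper's: part (2) via the integral form of Steinberg's theorem (the $W$-invariants of $\Lambda[X^*(T)]$ are free over $\Lambda[X^*(G/G_{\der})]$ on fundamental characters, which exist over $\Lambda$ because the weights $\omega_\alpha$ lift to $T$ when $G_{\der}$ is simply connected --- the paper gets this by adapting \cite{bezrukavnikovModularAffineHecke2022} Lemma~2.3, taking $\chi_\alpha$ to be the character of $\Ind_B^G\omega_\alpha$, cf.\ \cite{leeAdjointQuotientsReductive2015}), and part (1) by reducing to the geometric fibres over $\Lambda$ and quoting field-case Steinberg theory (\cite{bezrukavnikovModularAffineHecke2022} Propositions~2.4 and~2.5). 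However, two steps as you have written them do not work. First, you apply miracle flatness directly over $\Lambda$: since $\Lambda$ is an arbitrary ring, smoothness of $G$ and of $\AA^r_\Lambda\times G/G_{\der}$ \emph{over} $\Lambda$ does not make the source Cohen--Macaulay or the target regular (take $\Lambda$ non-reduced, or not CM). The correct order is the paper's: use part (2) (or Lee's flatness of the invariant ring) to see that $G\git G$ is flat over $\Lambda$, then invoke the fibrewise criterion for flatness of $\Ch$ relative to $\Lambda$, and only on each geometric fibre --- where the source is CM and the target is an affine space times a torus over a field, hence regular --- run the miracle-flatness / dimension-of-fibres argument. You invoke the fibrewise reduction only for smoothness of $\Ch|_{G^{\reg}}$; it is needed for the flatness claim as well.

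Second, your ``cleanest route'' tangent-space computation is logically backwards: knowing that $\ker d\Ch_g$ \emph{contains} the $(\dim G - r)$-dimensional tangent space of the conjugacy class only bounds the rank of $d\Ch_g$ above by $r$; it cannot yield surjectivity. Surjectivity of $d\Ch_g$ at regular elements (equivalently, that regular elements are smooth points of their Steinberg fibres, and that the regular locus of each fibre is a single smooth orbit) is exactly the nontrivial input of \cite{steinbergRegularElementsSemisimple1965}, and it is here --- not in the existence of invariants --- that simple connectedness rescues bad characteristic. Since you also cite this classical result, the argument survives if you delete the purported computation and rest on the citation (this is what the paper does via \cite{bezrukavnikovModularAffineHecke2022} Proposition~2.5), but the computation itself proves nothing. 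A minor point: the $r$ in part (2) should be the semisimple rank (the number of fundamental weights), not the rank of $T$, an abuse already present in the statement.
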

\begin{proof}
  \begin{enumerate}
  \item By the fibrewise criterion for flatness, we may check this after base-change to a field. It then follows from
    \cite[Propositions~2.4 and~2.5]{bezrukavnikovModularAffineHecke2022}.
  \item This is \cite[Lemma~2.3]{bezrukavnikovModularAffineHecke2022}  over a field; however, the proof works over
    $\Lambda$ if one takes $\chi_\alpha$ in that proof to be the character of $\Ind_B^G \omega_\alpha$ rather than its
    socle. (In the semisimple simply connected case, see also \cite[Corollary~5.2]{leeAdjointQuotientsReductive2015}).
  \end{enumerate}
\end{proof}

Finally, we recall the statement of \cite[Theorem~7.13]{cotnerCentralizersSectionsReductive2022} (noting that, if
$G_{\der}$ is simply connected, then `regular' and `strongly regular' in \cite{cotnerCentralizersSectionsReductive2022}
are equivalent).

\begin{proposition} (Cotner) \label{prop:cotner} Suppose that $\Lambda = L$ is a field, that $G_{\der}$ is simply
  connected, and that $Z_G$ is smooth. Then, for every regular element $g \in G(L)$, the centralizer $G_{g}$ is smooth.
\end{proposition}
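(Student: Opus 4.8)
The plan is to reduce, through the Jordan decomposition and the structure of Borel subgroups, to the smoothness of the centraliser of a regular unipotent element inside the unipotent radical of a Borel, which is the heart of the matter and is supplied by Cotner.

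Since smoothness of the group scheme $G_g$ may be checked after base change to $\bar{L}$, I would first assume $L$ algebraically closed. As $g$ is regular, $\dim G_g = r := \rk G$, and $\Lie G_g = \gf^g := \ker\bigl(\Ad(g) - \id \mid \gf\bigr)$; the inequality $\dim_L \gf^g \ge r$ being automatic, everything comes down to the bound $\dim_L \gf^g \le r$. I stress that Theorem~\ref{thm:sigma-reg}(1) does not give this on its own: it shows that the conjugacy class of $g$ is open in the Chevalley fibre $\Ch^{-1}(\Ch(g))$, which is smooth of dimension $\dim G - r$ near $g$, whence by miracle flatness the conjugation orbit map $G \to \Ch^{-1}(\Ch(g))$ is flat at the identity — but smoothness of that map at the identity is exactly equivalent to the assertion to be proved. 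Nor can the hypothesis on $Z_G$ be dropped: for $G = \mathrm{SL}_p$ with $\ch L = p$, where $Z_G = \mu_p$ is non-smooth, a regular unipotent $g$ has $\gf^g$ equal to the space of polynomials in a regular nilpotent, of dimension $p = r+1$.

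Next I would pass to the unipotent case via the Jordan decomposition $g = su$. Setting $H := C_G(s)$: because $G_{\der}$ is simply connected, Steinberg's theorems give that $H$ is connected reductive of rank $r$ with $H_{\der}$ simply connected; moreover $g$ is regular in $G$ if and only if $u$ is regular in $H$, and $G_g = C_H(u)$. So it suffices to treat $H$ and the regular unipotent $u$ — provided one also knows $Z_H$ is smooth, for which one needs a separate lemma (a direct computation with the character group of a maximal torus of $H$, or the observation that the relevant condition on $\ch L$ passes to pseudo-Levi subgroups). Granting that, fix a Borel $B = TU \supset T$ of $H$ containing $u$, so that $u \equiv \prod_{\alpha \in \Delta} x_\alpha(c_\alpha) \pmod{[U,U]}$ with every $c_\alpha \ne 0$. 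As a regular unipotent lies in a unique Borel, $C_H(u)$ normalises $B$, so $C_H(u) \subseteq N_H(B) = B$ (upgrading this to a scheme-theoretic inclusion uses that a regular unipotent section determines a Borel section, i.e. that the flag variety resolves the unipotent variety isomorphically over the regular locus). Reading the conjugation action of an element $tv \in C_B(u)$ ($t \in T$, $v \in U$) in $U/[U,U] = \prod_{\alpha \in \Delta} U_\alpha$, on which $U$ acts trivially, forces $\alpha(t) = 1$ for all $\alpha \in \Delta$, i.e. $t \in \bigcap_{\alpha \in \Delta}\ker\alpha = Z_H$, and then — $t$ being central — $v \in C_U(u)$. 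This yields an isomorphism of group schemes $C_H(u) \cong Z_H \times C_U(u)$; as $Z_H$ is smooth, the proposition reduces to showing that $C_U(u)$ is smooth (necessarily of dimension $r - \dim Z_H$).

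The remaining point — that $C_U(u)$, the centraliser of a regular unipotent inside the unipotent radical of a Borel, is smooth — is what I expect to be the main obstacle. I would approach it via the contracting $\GG_m$-action on $U$ through a regular dominant cocharacter (say $2\rho^\vee$), under which all weights on $\uf = \Lie U$ are positive, together with the $T$-stable filtration $U \supseteq [U,U] \supseteq \cdots$ with vector-group quotients; working up this filtration and invoking the general fact that fixed-point schemes of torus actions on smooth affine schemes are smooth — in a form adapted to conjugation by the (non-fixed) element $u$, via the associated parabolic and its retraction onto the Levi — one obtains that $C_U(u)$ is a smooth connected unipotent group, in fact isomorphic to an affine space. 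Making this argument work uniformly — in particular in characteristics that are bad for the root system of $H$, where the principal nilpotent and the grading on $\uf$ require care — is precisely the content of \cite{cotnerCentralizersSectionsReductive2022} Theorem~7.13, whose statement we have recalled and which we use as a black box.
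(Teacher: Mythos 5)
The paper does not actually prove this statement: Proposition~\ref{prop:cotner} is quoted verbatim from \cite{cotnerCentralizersSectionsReductive2022} (Theorem~7.13), the only added content being the remark that ``regular'' and ``strongly regular'' there coincide once $G_{\der}$ is simply connected. Your write-up ends by invoking that same Theorem~7.13 as a black box, so you have not produced an independent argument; moreover you invoke it for the wrong statement. Cotner's Theorem~7.13 \emph{is} the assertion that the centralizer of a (strongly) regular section is smooth --- i.e.\ it is the proposition itself, not the special claim that the centralizer of a regular unipotent inside the unipotent radical of a Borel is smooth. If you intend to quote it only for that special case, then your chain of reductions has to carry real weight, and as written it does not. (Your preliminary observations --- that smoothness amounts to $\dim\ker(\Ad(g)-1)\le r$, that Theorem~\ref{thm:sigma-reg}(1) does not yield this, and the $\mathrm{SL}_p$ example in characteristic $p$ --- are all fine, but they are not the issue.)

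The genuine gap is in the Jordan-decomposition step. Steinberg's theorem gives that $H=C_G(s)$ is connected reductive of rank $r$ when $G_{\der}$ is simply connected, but it is \emph{false} in general that $H_{\der}$ is again simply connected: pseudo-Levi subgroups need not be (the centralizer of an involution in $G_2$ has derived group $(\mathrm{SL}_2\times\mathrm{SL}_2)/\mu_2$, and the $A_8$ pseudo-Levi of $E_8$ is $\mathrm{SL}_9/\mu_3$). Nor can this hypothesis be quietly dropped in the unipotent case: a regular unipotent in $\mathrm{PGL}_2$ in characteristic $2$ has centralizer of dimension $1$ but $\dim\ker(\Ad(u)-1)=2$, so its centralizer scheme is not smooth even though the centre of $\mathrm{PGL}_2$ is smooth (trivial). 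So the transfer of the two hypotheses to $H$ --- which you flag only for $Z_H$ and simply assert for $H_{\der}$ --- is precisely where the difficulty sits; repairing it would need an argument (e.g.\ that the order of $s$ is prime to the characteristic, constraining which fundamental groups of pseudo-Levis can occur in which characteristics), i.e.\ exactly the kind of case analysis the citation is meant to subsume. The scheme-theoretic inclusion $C_H(u)\subseteq B$ and the splitting $C_H(u)\cong Z_H\times C_U(u)$ also need more care than a parenthesis, but they are secondary. Finally, the one point the paper does address --- that ``regular'' in the sense used here agrees with Cotner's ``strongly regular'' when $G_{\der}$ is simply connected, which is what licenses the citation --- is absent from your proposal and is the only translation actually required.
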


\subsection{Application to $\Xf_G$}

\label{sec:char-sigma}

Return to the notation of section~\ref{sec:notation}. For this subsection we assume that $G$ is \emph{unramified}, in
the sense that $I_F$ acts trivially on $G^\circ$. We then may assume that $I_F \subset \ker(W_F \to \Gamma)$, and consider
only \emph{tame} parameters: those that factor through $W_F^0/P_F$. Define $\Xf^{\tm}_G$ to represent the functor on $\Lambda$-algebras
\[\Xf^{\tm}_G(R) = \{\text{$L$-parameters $\rho : W_F^\circ/P_F \to G(R)$}\}.\]
\begin{remark}\label{rmk:unram}
  By \cite{datModuliLanglandsParameters2020}~(4.4) and~(4.5), the critical case to consider is that of tame parameters
  for tamely ramified $G$ --- and when considering $\Xf^{\tm}_{G, \ZZ_l}$ it is possible to assume that the order of the
  automorphism $\sigma$ is a power of $l$. In particular, if $G$ is semisimple and simply connected, then it is no great
  loss to assume that $G$ is unramified so long as $l > 3$.
\end{remark}
We have that
\[W_F^0/P_F = \pres{\sigma, \Fr : \Fr \sigma \Fr^{-1} = \sigma^q}\]
and, for a parameter $\rho : W_F^0 \to G(R)$, we write
\[\rho(\sigma) = \Sigma_\rho \in G^\circ\]
(recalling that $G$ is unramified) and
\[\rho(\Fr) = (\Phi_\rho, \Fr).\]
Let $p_\Sigma$ and $p_\Phi$ be the morphisms $\Xf^{\tm}_G \to G^\circ$ taking $\rho$ to $\Sigma_\rho$ and $\Phi_\rho$
respectively.

Let $A_G = G^\circ\git G^\circ \cong T \git W$.  By Theorem~\ref{thm:chevalley} we have a morphism
\[\Ch: G^\circ \to A_G.\]
Note that $\Fr$ acts on $W$, $T$, and $A_G$. Let $[q] : G^\circ \to G^\circ$ be the $q$th power map.

The morphisms $\Fr$ and $[q]$ each descend to morphisms, also denoted $\Fr$ and $[q]$, from $G^\circ\git G^\circ$ to
$G^\circ \git G^\circ$. We let 
\[B_G = (G^\circ\git G^\circ)^{\Fr^{-1}[q]} = A_G^{\Fr^{-1}[q]}\] be the fixed-point
subscheme of $A_G$ under the map $\Fr^{-1}[q]$.

\begin{lemma}\label{lem:BG} The morphism $\Xf^\tm_G \to A_G$ sending $\rho$ to $\Ch(\Sigma_\rho)$ factors
  through a map
  \[\Ch_\Sigma : \Xf^\tm_G \to B_G.\]
\end{lemma}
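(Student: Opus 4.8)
The plan is to exhibit the factorization on the level of functors of points. Fix a $\Lambda$-algebra $R$ and a parameter $\rho \in \Xf^\tm_G(R)$, with $\Sigma_\rho = \rho(\sigma)$ and $\Phi_\rho$ as above. The defining relation $\Fr \sigma \Fr^{-1} = \sigma^q$ in $W_F^0/P_F$ gives, after applying the $L$-homomorphism $\rho$ and projecting to $G^\circ$ (using that $G$ is unramified, so $\Fr$ acts on $G^\circ$), the identity
\[{}^{\Fr}\!\left(\Sigma_\rho^{\,q}\right) = \Phi_\rho \,\Sigma_\rho\, \Phi_\rho^{-1}\]
in $G^\circ(R)$; equivalently $\Sigma_\rho^{\,q}$ and ${}^{\Fr^{-1}}\!\big(\Phi_\rho \Sigma_\rho \Phi_\rho^{-1}\big)$ coincide. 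Applying the Chevalley morphism $\Ch : G^\circ \to A_G$, which is invariant under $G^\circ$-conjugation, and using that $\Ch$ is equivariant for the $q$th power map and for the $\Fr$-action (these are exactly the descended morphisms $[q]$ and $\Fr$ on $A_G$ defined just above the lemma), we get
\[\Fr^{-1}\big([q](\Ch(\Sigma_\rho))\big) = \Fr^{-1}\big(\Ch(\Sigma_\rho^{\,q})\big) = \Fr^{-1}\big(\Ch(\Phi_\rho \Sigma_\rho \Phi_\rho^{-1})\big) = \Fr^{-1}\big(\Fr(\Ch(\Sigma_\rho))\big) = \Ch(\Sigma_\rho).\]
Hence $\Ch(\Sigma_\rho)$ is fixed by $\Fr^{-1}[q]$, i.e. it lies in $B_G(R) = A_G^{\Fr^{-1}[q]}(R)$. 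This is functorial in $R$, so the map $\rho \mapsto \Ch(\Sigma_\rho)$ factors through the closed subscheme $B_G \hookrightarrow A_G$, giving the desired morphism $\Ch_\Sigma$.

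Two small points need care, so I would spell them out. First, the commutation of $\Ch$ with $[q]$ and $\Fr$: the $q$th power map and the $\Fr$-action on $G^\circ$ are both morphisms commuting with conjugation, hence descend to $A_G = G^\circ \git G^\circ$, and that descent is precisely what is denoted $[q]$ and $\Fr$ on $A_G$ in the paragraph preceding the lemma; so $\Ch([q](g)) = [q](\Ch(g))$ and $\Ch({}^{\Fr}g) = \Fr(\Ch(g))$ hold by construction. Second, the scheme-theoretic factorization: a morphism of $\Lambda$-schemes $X \to A_G$ factors through the fixed-point subscheme $A_G^{\Fr^{-1}[q]}$ precisely when the two composites $X \to A_G \rightrightarrows A_G$ (the identity, and $\Fr^{-1}[q]$) agree, which is what the displayed computation checks on $R$-points for every $R$; by Yoneda this is the statement of equality of morphisms, so the factorization through the (closed) equalizer subscheme $B_G$ is automatic.

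There is essentially no obstacle here — the content is entirely the group relation $\Fr\sigma\Fr^{-1} = \sigma^q$ combined with the invariance of the Chevalley morphism. The only thing one must be slightly careful about is bookkeeping of the $\Fr$-twist: writing $B_G = A_G^{\Fr^{-1}[q]}$ rather than $A_G^{[q]\Fr^{-1}}$ or $A_G^{\Fr[q]}$ matters, and the computation above is what fixes the convention. If desired one could instead phrase the whole argument by noting that $\rho \mapsto (\Ch(\Sigma_\rho), \Ch(\Phi_\rho\Sigma_\rho\Phi_\rho^{-1}))$ lands in the graph of $\Fr^{-1}$, but the direct check is cleanest.
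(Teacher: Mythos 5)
Your overall strategy is exactly the paper's (apply $\rho$ to the relation $\Fr\sigma\Fr^{-1}=\sigma^q$, project to $G^\circ$, apply $\Ch$, and use that $[q]$ and $\Fr$ descend to $A_G$, so that the factorization through the closed fixed-point subscheme $B_G$ follows on functors of points), but the key computation has the $\Fr$-twist on the wrong side, and this is precisely the bookkeeping that constitutes the content of the lemma. Conjugating $\rho(\sigma)=(\Sigma_\rho,1)$ by $\rho(\Fr)=(\Phi_\rho,\Fr)$ in $G^\circ\rtimes\Gamma$ gives $\Phi_\rho\,{}^{\Fr}\Sigma_\rho\,\Phi_\rho^{-1}=\Sigma_\rho^q$, not your identity ${}^{\Fr}(\Sigma_\rho^q)=\Phi_\rho\Sigma_\rho\Phi_\rho^{-1}$; the two are not equivalent in general. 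If you follow your stated relation consistently you get $\Ch(\Sigma_\rho)=\Ch(\Phi_\rho\Sigma_\rho\Phi_\rho^{-1})=\Ch({}^{\Fr}(\Sigma_\rho^q))=\Fr\bigl([q]\Ch(\Sigma_\rho)\bigr)$, i.e.\ a point of $A_G^{\Fr[q]}$ rather than of $B_G=A_G^{\Fr^{-1}[q]}$ --- a genuinely different subscheme unless $\Fr^2$ acts trivially on $A_G$.

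Moreover, your displayed chain does not follow from your own relation: by conjugation-invariance of $\Ch$ one has $\Ch(\Phi_\rho\Sigma_\rho\Phi_\rho^{-1})=\Ch(\Sigma_\rho)$, so your step $\Ch(\Phi_\rho\Sigma_\rho\Phi_\rho^{-1})=\Fr(\Ch(\Sigma_\rho))$ would assert $\Ch(\Sigma_\rho)=\Fr(\Ch(\Sigma_\rho))$, which is false in general, and the step $\Ch(\Sigma_\rho^q)=\Ch(\Phi_\rho\Sigma_\rho\Phi_\rho^{-1})$ is likewise unjustified. Both steps become correct once the middle term is repaired to $\Ch(\Phi_\rho\,{}^{\Fr}\Sigma_\rho\,\Phi_\rho^{-1})$: with the correct relation the chain is simply $[q]\Ch(\Sigma_\rho)=\Ch(\Sigma_\rho^q)=\Ch(\Phi_\rho\,{}^{\Fr}\Sigma_\rho\,\Phi_\rho^{-1})=\Ch({}^{\Fr}\Sigma_\rho)=\Fr(\Ch(\Sigma_\rho))$, which is the paper's computation and lands in $A_G^{\Fr^{-1}[q]}=B_G$ as required. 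Your remarks on the descent of $[q]$ and $\Fr$ to $A_G$ and on the Yoneda/equalizer formulation of the factorization are fine; only the semidirect-product computation and the resulting chain need to be corrected.
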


\begin{proof} If $\rho$ is an $R$-point of $\Xf^\tm_G$ for a $\Z[1/p]$-algebra $R$, then
  $\rho(\Fr) \Sigma_\rho \rho(\Fr)^{-1} = \Sigma_\rho^q$. This is equivalent to
  \[\Phi_\rho {}^{\Fr} \Sigma_\rho \Phi_{\rho}^{-1} = \Sigma_{\rho}^q\]
  and so \[{}^{\Fr}\Ch(\Sigma_\rho) = \Ch({}^{\Fr}\Sigma_\rho) = \Ch(\Sigma_\rho^q) = [q]\Ch(\Sigma_\rho).\] Thus
  $\Ch(\Sigma_\rho) \in (G^\circ\git G^\circ)^{\Fr^{-1}[q]} = B_G$ as required.
\end{proof}

\begin{remark}\label{rem:base-change} Suppose that $\Lambda$ is a $\Z[1/p]$-algebra and
  let $A_{G, \Lambda} = T_\Lambda\git W$ and $B_{G, \Lambda} = A_{G, \Lambda}^{\Fr^{-1}[q]}$. By
  Theorem~\ref{thm:chevalley} part~(2), $A_{G, \Lambda} = A_G \times_{\Spec\Z[1/p]}\Spec \Lambda$. It follows formally that
  $B_{G, \Lambda} = B_G \times_{\Spec \Z[1/p]} \Lambda$. Finally, the map $\Ch_\Lambda : G^\circ_\Lambda \to A_{G, \Lambda}$
  provided by Theorem~\ref{thm:chevalley} part~(1) agrees with the base change of $\Ch$ to $\Lambda$, and
  Lemma~\ref{lem:BG} holds over $\Lambda$.
\end{remark}

Let $\Xf_G^{\sreg} = p_\Sigma^{-1}(G^{\circ,\reg})$. It is an open subscheme of $\Xf^\tm_G$.

\begin{theorem}\label{thm:sreg} Suppose that $G^\circ_{\der}$ is simply connected and that $\Lambda$ is a
  $\Z[1/p]$-algebra such that $Z_{G^\circ, \Lambda}$ is smooth over $\Lambda$.  Then the restriction
  \[\Ch_{\Sigma, \Lambda}|_{\Xf_{G,\Lambda}^{\sreg}} : \Xf_{G,\Lambda}^{\sreg} \to B_{G,\Lambda}\]
  is smooth and surjective.
\end{theorem}
\begin{proof}
  Let
  \begin{align*}\Zf^{reg} &= \{\Sigma \in G^{\circ, \reg} : \Sigma^q \in G^{\circ, \reg} \text{ and } \Ch({}^{\Fr}\Sigma)
                            = \Ch(\Sigma^q).\} \\
    &\subset G^{\circ, \reg} \times_{A_G} B_G
  \end{align*}
  with the latter inclusion being an open immersion. We show that in fact it is an equality. For this, it suffices to
  show that, if $\Sigma \in G^{\circ}(\kappa)$ is regular, with $\kappa$ an algebraically closed field (and
  $\Lambda$-algebra), such that $\textrm{Ch}({}^{\operatorname{Fr}} \Sigma)$ and $\textrm{Ch}(\Sigma)$ are equal, then
  ${}^{\operatorname{Fr}} \Sigma$ is conjugate to $\Sigma^q$. Considering the $su$ decomposition of $\Sigma$ and replacing
  $G$ by $Z_{G}(s)^{\circ}$, it is enough to prove that if $\Sigma$ is regular unipotent then so are
  ${}^{\operatorname{Fr}} \Sigma$ and $\Sigma^q$. The first is clear. For the second, we may assume that $G$ is semisimple
  and use the characterisation of regular unipotent elements as those whose projections onto all the root spaces for
  simple roots are nontrivial (we thank Sean Cotner for help with this).

  Then the map $\Ch : \Zf^{\reg} \to B_G$ is smooth and surjective by Theorem~\ref{thm:sigma-reg}~(1), and
    $p_\Sigma|_{\Xf^{\sreg}}$ factors through a map
    \[p_\Sigma : \Xf^{\sreg} \to \Zf^{\reg}\] that we wish to show is smooth.

    Note that there is a closed immersion
    \begin{align*}
      \iota : \Zf^{reg} & \to G^{\circ, \reg} \times_{A_G} G^{\circ,\reg}\\
      \Sigma & \mapsto ({}^{\Fr}\Sigma, \Sigma^q).
    \end{align*}
    If we let
    \[c : G^\circ \times G^{\circ, \reg} \to G^{\circ, \reg} \times_{A_G} G^{\circ, \reg}\]
    be the conjugation
    morphism $c(\gamma, g) = (g,\gamma g \gamma^{-1})$ then the pullback of $\Zf^{reg}$ along $c$ is
    \begin{align*}&\{(\gamma, g, \Sigma) \in G^\circ \times G^{\circ, \reg} \times G^{\circ, \reg} : g = {}^{\Fr} \Sigma,
                    \gamma g \gamma^{-1} = \Sigma^q\} \\
                  &= \{(\gamma, \Sigma) : \gamma {}^{\Fr}\Sigma \gamma^{-1} = \Sigma^q\} \\
                  &= \Xf_G^{\sreg}.
    \end{align*}
    It therefore suffices to show that $c$ is smooth and surjective after base change to $\Lambda$. We think of $c$ as a morphism of
    $A_G$-schemes where $G^\circ \times G^{\circ,\reg}$ is regarded as an $A_G$-scheme via $\Ch$ on the second factor. Since
    $G \times G^{\circ,\reg}$ is flat over $A_G$ by Theorem~\ref{thm:sigma-reg}~(1), we may apply the fibrewise
    criterion. It is therefore enough to show that, for every geometric point $s : \Spec \kappa \to A_{G,\Lambda}$,
    \[c_s : G^\circ_\kappa \times_{\Spec \Lambda} G^{\circ,\reg}_s \to G^{\circ, \reg}_s \times_{\Spec \Lambda}
      G^{\circ, \reg}_s\] is smooth and surjective.

    By Theorem~\ref{thm:sigma-reg}~(1), $G^{\circ, \reg}_s$ is smooth and by \cite{bezrukavnikovModularAffineHecke2022}
    Proposition~2.5 it is a single $G^\circ_\kappa$-orbit. The fibre of $c_s$ above a point
    $(g, g') \in G^{\circ, \reg}_s(\kappa) \times G^{\circ, \reg}_s(\kappa)$ is then a $G^\circ_{\kappa,g}$-torsor. It
    follows from the miracle flatness theorem that $c_s$ is flat, and from Proposition~\ref{prop:cotner} that it is
    smooth.
\end{proof}

As an application, we generalise \cite{helmCurtisHomomorphismsIntegral2020} Proposition~5.3.

\begin{corollary} Suppose that the hypotheses of Theorem~\ref{thm:sreg} hold with $\Lambda = \Z_l$. Let
  $S = \Oc(\Xf^\tm_{G, \Lambda})^{G^\circ_\Lambda}$ and $R = \Oc(B_{G,\Lambda})$. Then the natural map $\Ch_\Sigma^* : R \to S$ is injective
    with saturated image.
\end{corollary}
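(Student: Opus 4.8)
The plan is to deduce the corollary from Theorem~\ref{thm:sreg} by descent along the quotient by $G^\circ$. Concretely, the smooth morphism $\Ch_\Sigma : \Xf_G^{\sreg} \to B_G$ is $G^\circ$-equivariant for the trivial action on $B_G$, so it descends to a morphism on quotient stacks $[\Xf_G^{\sreg}/G^\circ] \to B_G$. On the level of invariant rings this means that $R = \Oc(B_G) \to \Oc(\Xf_G^{\sreg})^{G^\circ}$ is the relevant map; the first step is therefore to compare $\Oc(\Xf_G^{\tm})^{G^\circ}$ with $\Oc(\Xf_G^{\sreg})^{G^\circ}$ and check that $\Ch_\Sigma$ on $\Xf_G^{\tm}$ already factors through $B_G$ (which is the content of the Lemma just before Theorem~\ref{thm:sreg}) so that $\Ch_\Sigma^* : R \to S$ makes sense. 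Here one needs that the $\Sigma$-regular locus is dense enough that restriction $\Oc(\Xf_G^{\tm})^{G^\circ} \to \Oc(\Xf_G^{\sreg})^{G^\circ}$ is injective — which follows because $\Xf_G^{\tm}$ is flat over $\Lambda$ with fibres that are complete intersections, hence has no embedded components whose generic points lie outside the open dense $\Sigma$-regular locus; alternatively one argues that $\Xf_G^{\sreg}$ meets each irreducible component.

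The second step is injectivity of $\Ch_\Sigma^* : R \to S$. Since $\Ch_\Sigma|_{\Xf_G^{\sreg}}$ is smooth, it is in particular flat, hence faithfully flat onto its image; one checks the image of $\Spec S \to \Spec R$ (i.e.\ of $[\Xf_G^{\sreg}/G^\circ] \to B_G$) is all of $B_G$, e.g.\ because every closed point of $B_G$ corresponds to a semisimple $\Fr$-twisted $q$-power conjugacy datum that is realised by some $\Sigma$-regular (indeed, one may take $\Sigma$ regular semisimple or regular unipotent) parameter. Faithful flatness then gives that $R \to S$ is injective (even pure). Injectivity could also be seen more directly: $B_G$ is reduced — by Theorem~\ref{thm:sigma-reg}~(2), $A_G \cong \AA^r_\Lambda \times G^\circ/G^\circ_{\der}$ is smooth over $\Lambda$, and $B_G = A_G^{\Fr^{-1}[q]}$ is a fixed-point subscheme of a smooth $\Lambda$-scheme under a finite-order-ish endomorphism, hence (at least after the standing hypotheses) flat with reduced fibres over $W(\F_l)$ — so it suffices to show the map is injective after inverting $l$ and modulo $l$ separately, and in each case one has a dominant map of (reduced) schemes.

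The third and main step is saturatedness of the image: we must show $S/\Ch_\Sigma^*(R)$ is $l$-torsion-free, equivalently that $\Ch_\Sigma^*(R) = \Ch_\Sigma^*(R)[1/l] \cap S$ inside $S[1/l]$. This is where smoothness is used in earnest. Because $\Ch_\Sigma|_{\Xf_G^{\sreg}}$ is smooth and $B_G$ is flat over $W(\F_l)$, the scheme $\Xf_G^{\sreg}$ is flat over $W(\F_l)$, and the formation of $G^\circ$-invariants commutes with the flat base change $W(\F_l) \to \F_l$ provided $G^\circ$ acts on $\Xf_G^{\sreg}$ in a cohomologically trivial enough way — this is the crux, and the natural tool is that $G^\circ$ is a (geometrically) reductive group scheme over $W(\F_l)$ with smooth centre and simply connected derived group, so that by the results on reductive GIT over general bases (e.g.\ Alper's adequate moduli spaces, or Cotner's recent work cited in the paper) one has $\Oc(\Xf_G^{\sreg})^{G^\circ} \otimes_{W(\F_l)} \F_l \hookrightarrow \Oc(\Xf_{G,\F_l}^{\sreg})^{G^\circ}$, and more importantly $\Oc(\Xf_G^{\sreg})^{G^\circ}$ is $l$-torsion-free with $\Oc(\Xf_G^{\sreg})^{G^\circ}/l \hookrightarrow \Oc(\Xf_{G,\F_l}^{\sreg})^{G^\circ}$. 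Granting this, saturatedness follows by a diagram chase: given $s \in S$ with $ls = \Ch_\Sigma^*(r)$ for some $r \in R$, reduce mod $l$ to get $\Ch_\Sigma^*(\bar r) = 0$ in $\Oc(\Xf_{G,\F_l}^{\sreg})^{G^\circ}$; since $\Ch_\Sigma^* : R/l \to S/l$ is injective (the mod-$l$ case of step two, using that $B_{G,\F_l}$ is reduced and $\Ch_\Sigma$ on $\Xf_{G,\F_l}^{\sreg}$ is smooth hence flat and dominant), we get $\bar r = 0$, so $r = lr'$ for some $r' \in R$ (as $R$ is $l$-torsion-free, $B_G$ being flat over $W(\F_l)$), whence $l(s - \Ch_\Sigma^*(r')) = 0$ and $s = \Ch_\Sigma^*(r') \in \Ch_\Sigma^*(R)$.

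The hard part will be the commutation of $G^\circ$-invariants with reduction mod $l$ and the $l$-torsion-freeness of $S = \Oc(\Xf_G^{\tm})^{G^\circ}$: this is exactly the kind of statement that fails for non-reductive or non-smooth group actions, and making it precise requires either invoking the base-change properties of good/adequate moduli spaces for the action of the reductive group scheme $G^\circ$ (using smoothness of $Z_{G^\circ}$ and simple-connectedness of $G^\circ_{\der}$, which are in force) or arguing that $\Ch_\Sigma : \Xf_G^{\sreg} \to B_G$ realises $B_G$ (after restricting to suitable opens) as a good moduli space for $[\Xf_G^{\sreg}/G^\circ]$ directly, since the geometric fibres of $\Ch_\Sigma$ are single $G^\circ$-orbits with smooth (hence reductive-up-to-unipotent) stabilisers. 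Everything else is formal once that input is available.
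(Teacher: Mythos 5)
Your skeleton is the same as the paper's: injectivity of $\Ch_\Sigma^*$ from faithful flatness of $\Ch_\Sigma$ on the $\Sigma$-regular locus, and saturation from the chase ``if $ls=\Ch_\Sigma^*(r)$, reduce mod $l$, use injectivity of $\Ch_\Sigma^*\otimes\F_l$, conclude by $l$-torsion-freeness''. The genuine gap is exactly where you write ``the hard part'': you never prove the commutation of $G^\circ$-invariants with reduction mod $l$, nor the $l$-torsion-freeness of $S$, deferring both to good/adequate moduli space machinery. Beyond being unproven, that input is not needed, and asking for it is a misdirection. First, $S$ is $l$-torsion-free simply because it is a subring of $\Oc(\Xf^\tm_G)$ and $\Xf_G$ is $\Lambda$-flat by Dat--Helm--Kurinczuk--Moss; no invariant theory enters (note also that if $s\in S$ is divisible by $l$ in $\Oc(\Xf^\tm_G)$ then $s/l$ is again invariant, so $lS=S\cap l\Oc(\Xf^\tm_G)$). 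Second, the mod-$l$ injectivity you need is injectivity of $R/l\to S/l$, and for that it suffices that the \emph{composite} $R/l\to S/l\to\Oc(\Xf^{\sreg}_{G,\F_l})$ be injective; this is just the faithful-flatness argument run on the special fibre (smoothness of $\Ch_\Sigma|_{\Xf^{\sreg}_G}$ base-changes, surjectivity persists), which is what the paper means by ``the same argument shows that $\Ch_\Sigma^*\otimes\F_l$ is injective''. At no point must one compare $S/l$ with $\Oc(\Xf^{\sreg}_{G,\F_l})^{G^\circ}$ or know that invariants commute with base change. Equivalently, run the whole chase inside $\Oc(\Xf^{\sreg}_G)$: a faithfully flat ring map $A\to B$ is pure, so $lB\cap A=lA$, giving $r\in lR$ directly, and torsion-freeness of $S$ finishes.

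Two smaller points. Your parenthetical claim that $B_G=A_G^{\Fr^{-1}[q]}$ is ``flat with reduced fibres over $W(\F_l)$'' is false in general: already for $G^\circ=\GL_1$ one has $B_G\cong\mu_{q-1}$, whose special fibre is non-reduced whenever $l\mid q-1$. You only use this in your alternative route to injectivity, so it is not fatal, but it should be removed; where the paper needs a property of $R$ it invokes instead that $R$ is of finite rank over $\Lambda$ (and in fact the purity formulation above needs no hypothesis on $R$ at all). Finally, you are right that faithful flatness requires surjectivity of $\Xf^{\sreg}_G\to B_G$ on both fibres; your sketch (choose a regular $\Sigma$ over a given point of $B_G$, then produce $\Phi$ with $\Phi\,{}^{\Fr}\Sigma\,\Phi^{-1}=\Sigma^q$) is the right idea and is also left implicit in the paper, but note that invoking the single-orbit property of regular fibres requires first checking that $\Sigma^q$ is again regular, which uses $Z_{G^\circ}(\Sigma_s)\subseteq Z_{G^\circ}(\Sigma_s^q)$ together with the equality of dimensions forced by $\Ch({}^{\Fr}\Sigma)=\Ch(\Sigma^q)$.
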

\begin{proof}
  Injectivity follows from the faithful flatness of $\Ch_\Sigma$ restricted to the regular locus. If $l \ne p$ is a
  prime, then the same argument shows that $\Ch_{\Sigma}^* \otimes \F_l = \Ch_{\Sigma, \F_l}^*$ (see
  Remark~\ref{rem:base-change}) is injective. Since $S$ is a torsion-free $\Lambda$-module, this
  implies that the image of $\Ch^*_{\Sigma}$ is saturated.
\end{proof}

\begin{remark}\label{rmk:ram} Remark~\ref{rmk:unram} notwithstanding, it would be desirable to generalise these results
  to the case that $G$ is tamely ramified. In this case $G^\circ \git G^\circ$ should be replaced by
  \[A_{G, \sigma} = (G^\circ \rtimes \sigma) \git G^\circ = G^\circ \git_\sigma G^\circ\] (where $\git_\sigma$ denotes the quotient for
  $G^\circ$ acting by $\sigma$-twisted conjugation). The $q$-power map and Frobenius map define $G^\circ$-equivariant morphisms
  \[G^\circ \rtimes \sigma \to G^\circ \rtimes \sigma^q\] and we let $B_{G,\sigma} = A_{G,\sigma}^{\Fr^{-1}[q]}$ be
  their equaliser. There is a notion of $\sigma$-regularity, for which see \cite{xiaoVectorvaluedTwistedConjugate2019}
  section~5. We then expect that, under appropriate hypotheses on $G^\circ$ (perhaps those of
  \cite{xiaoVectorvaluedTwistedConjugate2019} Remark~4.3.4 together with a hypothesis on the centre) there will be a
  smooth morphism $\Xf_G^{\sreg} \to B_{G,\sigma}$. For this we would require generalizations of the results of
  section~\ref{sec:sigma-reg-chevalley}: that is, the results of \cite{xiaoVectorvaluedTwistedConjugate2019} sections~4
  and~5 over a general base and for a reductive (rather than semisimple) group, and Proposition~\ref{prop:cotner} in the
  twisted context.
\end{remark}

\begin{remark} If $G$ is the $L$-group of an unramified group then we have a representation-theoretic interpretation of
  $B_G$, see Remark~\ref{rmk:endomorphism}.  It would be interesting to have a similar representation-theoretic
  interpretation of $B_{G,\sigma}$, as defined in Remark~\ref{rmk:ram}, in the tamely ramified case.
\end{remark}

\section{Levi subgroups}
\label{sec:levi}

For this section we no longer assume that $G$ is unramified. Let $M \subset G$ be a standard Levi subgroup in the
sense of \cite[section 3]{borelAutomorphicFunctions1979}: that is, a subgroup of the form $M^\circ \rtimes \Gamma$ where
$M^\circ$ is a $\Gamma$-stable standard (with respect to our chosen $T$) Levi subgroup of $G^\circ$. Since $\Gamma$
preserves a Borel subgroup $B \supset T$, there is a $\Gamma$-stable standard parabolic subgroup $P^\circ$ with Levi
$M^\circ$, and we write $P = P^\circ \rtimes \Gamma$. Let $U$ be the unipotent radical of $P^\circ$ and $U^{-}$ its
opposite. We have a decomposition $\Lie(G) = \Lie(M) \oplus \Lie(U) \oplus \Lie(U^{-})$ that is stable under the action
of $W_F^0$ on $\Lie(G)$. We also have a natural closed immersion
\[\Xf_M \into \Xf_G\]
and a conjugation map
\begin{align*}
  c : G^\circ \times \Xf_M & \to \Xf_G \\
  (g, \rho) &\mapsto g \rho g^{-1}.
\end{align*}

We will show that there is an open and fibrewise dense subset $V \subset \Xf_M$ such that $c : G^\circ \times V \to \Xf_G$ is smooth. If we
fix a separated filtration $(P^d_F)_{d \ge 0}$ of $P_F$ by open subgroups then
\[\Xf_M^{d} = \{\rho \in \Xf_M : \rho|_{P_F^d} \text{ is trivial}\}\]
is a union of connected components of $\Xf_M$. 

Let $d \ge 1$ and let $r$ be an integer, depending on $d$, such that:
\begin{itemize}
\item $\Fr^r = e$ in $\Gamma$;
\item The conjugation action of $\Fr^r$ on $P_F/P_F^d$ is trivial; and
\item For any semisimple $s \in G^\circ(\bar{\Q})\rtimes\sigma$ such that $s$ is conjugate to $s^q$, we have $s = s^{q^r}$.
\end{itemize}
Such an integer must exist; for instance, choose a faithful representation $G \into GL_N$ and take
$r = n!\ord_{\Gamma}(\Fr)|\Aut(P_F/P_F^d)|$. The first two conditions immediately follow from the divisibility of $r$ by
its second and third factors, respectively. The third condition follows from the divisibility of $r$ by $n!$ and the fact that, if $s \in GL_N(\bar{\Q})$ with
$s$ conjugate to $s^q$, then $s = s^{q^{n!}}$. See also \cite{datModuliLanglandsParameters2020} Lemma~2.2 for a similar argument. Say
that $m \in M^\circ(R)\rtimes\Fr$ is \emph{avoidant at depth $d$} if:
\begin{itemize}
\item $\ad_{m} - 1$ and $\ad_{m} - q$ are invertible on $\Lie(U)(R)$ and $\Lie(U^{-})(R)$;
\item if $\chi$ is the characteristic polynomial of $\ad_{m^r}$ on $\Lie(M)(R) \oplus \Lie(U)(R)$, and
  $\gamma \in \Gamma$, then $\chi(\ad_{{}^\gamma m^r})$ is invertible on $\Lie(U^{-})(R)$; and
\item the same condition holds with the roles of $U$ and $U^{-}$ reversed.
\end{itemize}
The relevance of the first condition will become clear, while the second and third are used via the following lemma.
\begin{lemma}\label{lem:ab-conjugate} Suppose that $a, b \in M^\circ(\bar{\Q})$ satisfy the condition that, if $\chi$ is
  the characteristic
    polynomial of $\ad_{a}$ on $\Lie P_{\bar{\Q}} = \Lie M_{\bar{\Q}} \oplus \Lie U_{\bar{\Q}}$, then $\chi(\ad_b)$ is
    invertible on $\Lie(U^{-}_{\bar{\Q}})$, and similarly with the roles of $U$ and $U^{-}$ reversed.

    If $b = gag^{-1}$ for some $g$ in $G^\circ(\bar{\Q})$, then $g \in M^\circ(\bar{\Q})$.
  \end{lemma}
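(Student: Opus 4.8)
The plan is to reduce to the Lie algebra statement by a standard decomposition argument. Suppose $a = gbg^{-1}$ with $g \in G^\circ(\bar\Q)$. Using the Bruhat decomposition $G^\circ = \coprod_{w} P^\circ w P^\circ$ (or rather the refined statement $G^\circ = \coprod_w U^{\circ,-}_w \, w \, P^\circ$ where $U^{\circ,-}_w = U^{\circ,-} \cap w U^{\circ} w^{-1}$ is a subgroup of $U^{\circ,-}$), we may write $g = u^- \dot w p$ with $u^- \in U^{\circ,-}$, $\dot w$ a representative of a Weyl element $w$ of $G^\circ$ modulo $W_{M^\circ}$, and $p \in P^\circ$. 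Since $b \in M^\circ$ and $p \in P^\circ$ normalizes nothing we want yet, the cleanest route is: replace $b$ by $p b p^{-1}$ — but $p = m u$ with $m \in M^\circ$, $u \in U^\circ$, and conjugating $b$ by $m$ keeps it in $M^\circ$ (and preserves the hypothesis, which only depends on the $M^\circ$-conjugacy class via its characteristic polynomial on the relevant spaces). So after adjusting, it suffices to treat $g = u^- \dot w u$ with $u^- \in U^{\circ,-}$, $u \in U^\circ$. The goal becomes to show $w \in W_{M^\circ}$ and then that the unipotent parts vanish, forcing $g \in M^\circ$.

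\emph{First} I would handle the torus/semisimple part to pin down $w$. Let $t$ be a maximal torus of $M^\circ$ containing a regular-enough element; actually, better to argue on Lie algebras directly. The equation $a = gbg^{-1}$ gives $\ad_a = \Ad(g) \ad_b \Ad(g)^{-1}$ on $\Lie(G)$, so $\ad_a$ and $\ad_b$ have the same characteristic polynomial $P$ on all of $\Lie(G)$; in particular $P(\ad_b)$ — which by hypothesis is invertible on $\Lie(U^{-})$ — must satisfy that $P(\ad_a) = 0$ on $\Lie(G)$, hence the generalized eigenspace structure of $\ad_b$ on $\Lie(U^{-})$ is "disjoint" from its spectrum on $\Lie M \oplus \Lie U$. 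Precisely: the hypothesis says no eigenvalue of $\ad_b$ on $\Lie(U^{-})$ is an eigenvalue of $\ad_a$ on $\Lie M \oplus \Lie P$; symmetrically for $U$. Now decompose $\Lie G = \Lie(U^-) \oplus \Lie M \oplus \Lie(U)$ and consider how $\Ad(g)$ interacts. Writing $g = u^- \dot w u$, the map $\Ad(\dot w)$ permutes root spaces; I claim the spectral disjointness forces $\Ad(\dot w)$ to preserve the decomposition $\Lie(U^-) \mathbin{/} \Lie M \oplus \Lie(U)$, which combined with $\dot w$ normalizing $T$ forces $w$ to preserve the set of roots of $U^\circ$, i.e. $w \in W_{M^\circ}$. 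The point is that $\Ad(u^-)$ and $\Ad(u)$ are unipotent and upper/lower triangular with respect to the filtration by $U^-$-height, so they cannot move the $\Lie(U^-)$-part of an eigenvector "across" into $\Lie M \oplus \Lie U$ in a way that survives the spectral separation — this is where I expect to spend real effort making the triangularity argument rigorous.

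\emph{Having} reduced to $w \in W_{M^\circ}$, absorb $\dot w$ into $M^\circ$, so now $g = u^- u$ with $u^- \in U^{\circ,-}$, $u \in U^\circ$, and $a = u^- u \, b \, u^{-1} (u^-)^{-1}$. Set $c = u b u^{-1}$. Then $c \in P^\circ$, write $c = m' v$ with $m' \in M^\circ$, $v \in U^\circ$; and $a = u^- c (u^-)^{-1} \in U^{\circ,-} P^\circ$. Since $a \in M^\circ$, looking at the $U^{\circ,-}$-component of the product $u^- c (u^-)^{-1}$ via the open cell $U^{\circ,-} \times M^\circ \times U^\circ \into G^\circ$: the $U^{\circ,-}$-part must be trivial. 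Differentiating (or arguing by height filtration), the bottom-height piece of $\log u^-$, call it $X \in \Lie(U^{-})$, must satisfy $\ad_c(X) \equiv X$ modulo higher height, i.e. $X$ lies in the $1$-eigenspace of $\ad_{m'}$ on $\Lie(U^-)$. But $m'$ is $M^\circ$-conjugate to $b$ (as $c = m' v$ with $v \in U^\circ$ and the image of $c$ in $M^\circ \cong P^\circ/U^\circ$ equals the image of $u b u^{-1} = b$ there), so by the hypothesis — $P(\ad_b)$ invertible on $\Lie(U^-)$, where $P$ is the char poly on $\Lie M \oplus \Lie P$, which in particular contains the eigenvalue $1$ if $\ad_b$ has it there, and $\ad_b$ acts on $\Lie M$ so it certainly has $1$ as... — wait, one must be slightly careful: invertibility of $P(\ad_b)$ on $\Lie(U^-)$ precisely says the eigenvalues of $\ad_b$ on $\Lie(U^-)$ avoid those on $\Lie M \oplus \Lie P$. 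Since $\ad_b$ preserves $\Lie M$ and acts there, and we only need that the eigenvalues it has on $\Lie(U^-)$ are disjoint from those on $\Lie M$, we get that $\ad_{m'} - 1$ is invertible on $\Lie(U^-)$ — hence $X = 0$. Inducting up the height filtration kills all of $\log u^-$, so $u^- = e$; then symmetrically (using the $U \leftrightarrow U^-$ reversed hypothesis) $u = e$, giving $g \in M^\circ$. \textbf{The main obstacle} is the middle step: controlling the Weyl element $w$ purely from the characteristic-polynomial hypothesis, since a priori $\Ad(u^- \dot w u)$ mixes everything — the resolution is to exploit that $\Ad(u^\pm)$ are triangular for the $U^-$-height grading, so the hypothesis (phrased as spectral disjointness of $\ad_b$ between $\Lie(U^-)$ and $\Lie M \oplus \Lie P$) propagates through the triangular conjugation to constrain $w$.
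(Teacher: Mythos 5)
There is a genuine gap, and it sits exactly at the step you flag as needing "real effort": the deduction that the Weyl element $w$ in your decomposition $g = u^-\dot w u$ lies in $W_{M^\circ}$. As written this is only asserted. The difficulty is not cosmetic: $\Ad(\dot w)$ permutes the $T$-root spaces, but your hypothesis is purely about generalised eigenvalues of $\ad_a$ and $\ad_b$, and $b$ is an arbitrary element of $M^\circ(\bar{\Q})$ — not semisimple, not in $T$, not regular — so its generalised eigenspaces have no a priori relation to the root-space decomposition that $w$ permutes. The "triangularity" of $\Ad(u^\pm)$ for the height filtration does not by itself bridge this: to make the argument rigorous you would in effect have to show that $\Ad(g)$ (not just $\Ad(\dot w)$) carries $\Lie(M)\oplus\Lie(U)$ into itself, and once you know that, the Bruhat decomposition, the control of $w$, and the subsequent height-filtration induction killing $u^-$ and $u$ are all unnecessary. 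Your endgame is essentially sound (and could be done more directly: from $a=u^-c(u^-)^{-1}$ with $a\in M^\circ$, $c\in P^\circ$, one gets $c\in aU^{\circ,-}\cap P^\circ=\{a\}$, and then the $1$-eigenvalue avoidance — which does hold, since $\ad_a$ fixes $\Lie Z_{M^\circ}(a)\neq 0$ inside $\Lie M$, so $1$ is a root of the relevant characteristic polynomial — forces the unipotent parts to be trivial), but without the Weyl-group step the proposal is not a proof. A smaller point: early on you conflate the $P$ of the hypothesis (characteristic polynomial on $\Lie M\oplus\Lie U$) with the characteristic polynomial on all of $\Lie G$; the correct reformulation, which you do reach, is spectral disjointness between $\ad_b$ on $\Lie(U^{-})$ and $\ad_a$ on $\Lie M\oplus\Lie U$ (and the reversed version).

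For comparison, the paper's proof is a three-line global eigenspace argument that avoids any decomposition of $g$: since $a=gbg^{-1}$, conjugation by $g^{\pm1}$ takes generalised eigenspaces of $\ad_a$ to generalised eigenspaces of $\ad_b$ with the same eigenvalue; the hypothesis says that no eigenvalue of $\ad_b$ on $\Lie(U^{-})$ is a root of $P$, so every generalised eigenspace of $\ad_b$ whose eigenvalue occurs for $\ad_a$ on $\Lie M\oplus\Lie U$ is contained in $\Lie M\oplus\Lie U$; hence conjugation by the appropriate power of $g$ stabilises $\Lie M\oplus\Lie U$, so $g$ normalises the parabolic $M^\circ U$ and, parabolics being self-normalising, lies in it. The reversed hypothesis gives $g\in M^\circ U^{-}$, and $M^\circ U\cap M^\circ U^{-}=M^\circ$. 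If you want to salvage your approach, the quickest repair is to replace the entire Bruhat/Weyl-element analysis by this eigenspace argument, after which nothing else is needed.
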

  \begin{proof}
    Let $g \in G^\circ(\bar{\Q})$ such that $gag^{-1} = b$. Then $\ad_g$ takes each (generalised) eigenspace of $\ad_a$
    to a generalised eigenspace of $\ad_b$ with the same eigenvalue. The assumption on $a$ and $b$ then implies that
    $\Lie(M_{\bar{\Q}}) \oplus \Lie(U_{\bar{\Q}})$ is stable under $\ad_g$, from which it follows that the parabolic
    subgroup $M^\circ_{\bar{\Q}} U_{\bar{\Q}}$ is normalised by $g$. Since a parabolic subgroup is its own normaliser,
    we see that $g \in M^\circ U$. Arguing similarly with $U$ and $U^{-}$ reversed, we see that
    $g \in M^\circ U \cap M^{\circ}U^{-} = M^\circ$ as required.
  \end{proof}

  Write $M^{d,a}$ for the open subscheme of $M^\circ\rtimes\Fr$ consisting of those $\Phi$ avoidant at depth $d$.  Let
  $\Xf_M^{d,a} = \{\rho \in \Xf^{d}_M : \rho(\Fr) \in M^{d,a}\}$, and let $\Xf_M^a = \bigcup_{d \ge 1} \Xf_M^{d,a}$.  We have a
  morphism
  \[c : G^\circ \times \Xf_M \to \Xf_G\] given by $c(g, \rho) = g\rho g^{-1}$. This descends to a morphism
  $\bar{c} : G^\circ \times^{M^\circ} \Xf_M \to \Xf_G$.
\begin{theorem}\label{thm:levi}
\begin{enumerate}
\item The open subscheme $\Xf_M^{a} \subset \Xf_M$ is fibrewise dense.
\item The restriction of $c$ to $G^\circ \times \Xf_M^{a}$ is smooth.
\item The restriction of $\bar{c}$ to $G^\circ \times^{M^\circ} \Xf_M^{a}$ is \'{e}tale.
\end{enumerate}
\end{theorem}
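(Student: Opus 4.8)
\textbf{Proof proposal for Theorem~\ref{thm:levi}.}

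The plan is to prove the three parts in order, with part~(2) carrying essentially all the content. For part~(1), the condition defining $\Xf_M^{d,a}$ is the non-vanishing of finitely many polynomial functions on $\Xf_M^d$ (the determinants of $\ad_m - 1$ and $\ad_m - q$ on $\Lie(U)$ and $\Lie(U^-)$, and the various $\det P(\ad_{{}^\gamma m^r})$ on $\Lie(U^{\pm})$), so $\Xf_M^{d,a}$ is open; taking the union over $d$ gives the open subscheme $\Xf_M^a$. For fibrewise density I would argue that on each connected component of $\Xf_{M,\bar\kappa}$ (for $\kappa$ a residue field of $\Z[1/p]$, and after passing to the algebraic closure), at least one point is avoidant: using the description of components via pairs $(\eta, \alpha)$ and the fact that $Y_\eta \cong (M^\circ)_\eta$ is irreducible, it suffices to exhibit a single $\Phi$ in each $\Ad_{\Phi_0}$-twisted conjugacy class of $(M^\circ)_\eta$ that is avoidant — and here one uses that the avoidance conditions can be checked on the semisimple part of $\Phi$, together with the defining property of $r$ (any semisimple $s$ with $s\sim s^q$ satisfies $s = s^{q^r}$), to reduce to a statement about finitely many elements; a careful but routine argument shows the avoidant locus meets every component. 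Since a nonempty open subset of an irreducible scheme is dense, part~(1) follows.

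For part~(2), the strategy mirrors the proof of Theorem~\ref{thm:sreg}: realise $c$ as a base change of a nicer morphism and apply the fibrewise criterion for smoothness. Concretely, I would factor $c|_{G^\circ \times \Xf_M^a}$ through $\bar c$ and first reduce to part~(3), since $G^\circ \times \Xf_M^a \to G^\circ \times^{M^\circ} \Xf_M^a$ is an $M^\circ$-torsor hence smooth and surjective, and $G^\circ$ is smooth; so $c$ is smooth if and only if $\bar c$ is. To prove $\bar c$ is étale I would check it is flat, unramified, and of finite presentation. Finite presentation is clear. For unramifiedness, the key input is Lemma~\ref{lem:ab-conjugate}: if $\rho, \rho' \in \Xf_M^a$ and $g \in G^\circ$ with $g\rho g^{-1} = \rho'$, then applying the lemma to $a = \rho'(\Fr^r)$ and $b = \rho(\Fr^r)$ — both lying in $M^\circ$, with the avoidance condition guaranteeing the characteristic-polynomial hypothesis — forces $g \in M^\circ$; this shows $\bar c$ is injective on points and, upgraded to the level of tangent spaces (the same linear-algebra argument applied to $\Lie(G^\circ) = \Lie(M^\circ)\oplus \Lie(U^\circ)\oplus\Lie(U^{\circ,-})$ shows the relative cotangent sheaf vanishes), that $\bar c$ is unramified. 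For flatness I would use a dimension/miracle-flatness argument: $\bar c$ is a morphism between schemes that are (fibrewise over $\Spec\Lambda$) local complete intersections of the same dimension $\dim G^\circ + 1$ — on the source, $\dim(G^\circ/M^\circ) + \dim\Xf_M = (\dim G^\circ - \dim M^\circ) + (\dim M^\circ + 1)$ — with the target fibrewise Cohen–Macaulay, and the fibres of $\bar c$ are finite (by the unramified + finite-type argument, combined with properness of the relevant orbit map, or directly: the fibre over $\rho'$ is $\{gM^\circ : g\rho g^{-1} = \rho', g \in G^\circ\}$, which is finite by Lemma~\ref{lem:ab-conjugate} applied as above together with $N_{G^\circ}(M^\circ)/M^\circ$ being finite); miracle flatness then gives flatness, and flat + unramified + finite presentation $=$ étale, proving part~(3), and hence part~(2).

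The main obstacle I anticipate is the flatness step in part~(3), and relatedly the bookkeeping needed to run the fibrewise criterion cleanly over the base $\Spec\Lambda$ rather than over a field: one must know that the source and target of $\bar c$ are flat over $\Lambda$ with geometrically Cohen–Macaulay (indeed lci) equidimensional fibres, which for $\Xf_M$ and $\Xf_G$ is exactly the syntomicity statement of the Dat–Helm–Kurinczuk–Moss theorem quoted at the start of Section~\ref{sec:moduli}, but for the quotient $G^\circ \times^{M^\circ} \Xf_M^a$ requires checking that the quotient by the free $M^\circ$-action preserves these properties (it does, since the quotient map is smooth surjective). A secondary subtlety is making sure the avoidance conditions — in particular the third, $r$-dependent one feeding into Lemma~\ref{lem:ab-conjugate} — really do force $g \in M^\circ$ at the Frobenius level and not merely at the level of the restriction to $I_F^0$; this is where the choice of $r$ (trivialising $\Fr^r$ on $\Gamma$ and on $P_F/P_F^d$, and the semisimple $q$-power condition) is used, and I would spell out that $\rho(\Fr^r)$ genuinely lies in $M^\circ$ (not just $M^\circ \rtimes \Fr^r$) so that Lemma~\ref{lem:ab-conjugate} applies verbatim.
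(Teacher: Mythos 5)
There are genuine gaps, and the two main ones sit exactly where you place the weight of the argument. For parts (2)--(3), the paper's proof runs through a structural identification that your proposal never makes: setting $Z=\{\rho\in\Xf^d_G:\rho(\Fr)\in M^{d,a}\}$, the map $G^\circ\times Z\to\Xf_G$ is the pullback, along $\rho\mapsto\rho(\Fr)$, of the group-level conjugation map $G^\circ\times M^{d,a}\to G$, which is smooth by the tangent computation $Dc(X,Y)=(\ad_m^{-1}-1)X+Y$ (this is where $\ad_m-1$ invertible on $\Lie(U^{\pm})$ enters); the real content is then that the closed immersion $\Xf_M^{d,a}\into Z$ is an \emph{isomorphism}, proved by reducing via $\Lambda$-flatness to $\bar\Q$-points and using Jordan decomposition, the choice of $r$, and Lemma~\ref{lem:ab-conjugate} applied to $\rho(\Fr)^r$ against its own $\Gamma$-twists ${}^\gamma\rho(\Fr)^r$ --- precisely the situation the third avoidance condition governs. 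Your substitute argument misapplies Lemma~\ref{lem:ab-conjugate}: for two distinct points $\rho,\rho'\in\Xf_M^a$ with $g\rho g^{-1}=\rho'$, the hypothesis of the lemma (the characteristic polynomial of $\ad_{\rho'(\Fr)^r}$ on $\Lie M\oplus\Lie U$ being coprime to the spectrum of $\ad_{\rho(\Fr)^r}$ on $\Lie U^-$) is not supplied by avoidance of each point separately, and the conclusion you draw --- $g\in M^\circ$, hence injectivity of $\bar c$ --- is false: conjugating an avoidant parameter by a representative of a nontrivial class in $N_{G^\circ}(M^\circ)/M^\circ$ (e.g.\ the Weyl element for $M=T$ in $GL_2$) produces a second avoidant point of $\Xf_M^a$ with the same image, and the remark following Theorem~\ref{thm:levi} records that $\bar c$ is an $N_{G^\circ}(M^\circ)/M^\circ$-cover onto its image, not an injection. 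Your flatness step is also not valid as stated: miracle flatness requires a \emph{regular} target, while $\Xf_G$ is only syntomic (lci, hence Cohen--Macaulay), so ``fibrewise CM target'' does not suffice; the paper never needs such an argument because smoothness is inherited from the base-change description above. Finally, unramifiedness cannot be read off from $\Lie G^\circ=\Lie M^\circ\oplus\Lie U\oplus\Lie U^-$ alone: the tangent space of $\Xf_G$ at $\rho$ is a space of cocycles for all of $W_F^0$, and the listed avoidance conditions only control the Frobenius direction, so the ``same linear algebra'' does not give vanishing of the relative cotangent sheaf without further work.

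For part (1), openness is fine, but your density argument is a placeholder: ``a careful but routine argument shows the avoidant locus meets every component'' is exactly the assertion to be proved, and nothing in the reduction you sketch (checking avoidance on semisimple parts, the defining property of $r$) produces an avoidant point inside a prescribed irreducible component. The paper's mechanism, which you would need (or an equivalent), is to write $M=Z_G(S)$ for a central torus $S$ on which $W_F^0$ acts trivially, with the weights of $S$ on $\Lie U$ (resp.\ $\Lie U^-$) positive (resp.\ negative) powers of a character $\lambda$, and to twist a given closed point $\rho$ of a fibre by the unramified characters $\chi_s$, $s\in S$: the avoidance conditions on $s\rho(\Fr)$ fail for only finitely many $s$, so $\chi_s\rho$ is avoidant for all but finitely many $s$ and $\rho$ lies in the closure of $\Xf_M^{d,a}$ (the connected family $s\mapsto\chi_s\rho$ keeps you in the same component). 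Without this twisting argument you have no proof of fibrewise density.
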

\begin{proof}
  If $M = G$ then all of these statements are trivial. So assume that $M$ is a \emph{proper} standard Levi subgroup. It is
  enough to prove the theorem for $\Xf_M^{d,a}$ for each fixed $d \ge 1$, so fix such a $d$.
  \begin{enumerate}
  \item Let $J \subsetneq \Delta$ be the set of simple roots associated to $P$. As in the proof of
    \cite{borelAutomorphicFunctions1979} Lemma~3.5, if we let $A$ be the maximal subtorus of $Z(M^\circ)$ and
    \[S = \{s \in A \cap G^{\circ}_{\der}: \alpha(s) = \beta(s) \text{ for all
        $\alpha, \beta \in \Delta \setminus J$}\},\] then $S$ is a rank one torus and $M = Z_G(S)$. If
    $\lambda \in \Delta \setminus J$, then the weights of $S$ acting adjointly on $\Lie(U)$ (respectively,
    $\Lie(U^{-})$) are positive (respectively, negative) powers of $\lambda|_S$, which is independent of the choice of
    $\lambda$ by definition. Since $W^0_F$ acts on $S \cong \GG_m$ by automorphisms and also preserves $U$ and
    $\lambda|_{S}$, it must fix $S$. For $s \in S$, let $\chi_s$ be the unramified character of $W_F^0$ sending $\Fr$ to
    $s$.

    Suppose that $x \in \Spec \Z[1/p]$ with residue field $\kappa$ and that $\rho$ is a closed point of the fibre
    $\Xf^d_{M,\kappa}$. Consider the map
    \begin{align*}S_\kappa & \to \Xf_{M, \kappa} \\
      s &\mapsto \chi_s \rho.
    \end{align*}
    The eigenvalues of $\ad_{s\rho(\Fr)}$ acting on $\Lie(U)$ are of the form $\lambda(s)^n\mu$ for $\mu$ an eigenvalue
    of $\ad_{\rho(\Fr)}$ and $n$ positive. Similarly, for each $\gamma \in \Gamma$, the eigenvalues of
    $\ad_{s {}^\gamma\rho(\Fr)}$ on $\Lie(U^{-})$ are of the form $\lambda(s)^n \mu'$ for $\mu'$ an eigenvalue of
    $\ad_{{}^\gamma\rho(\Fr)}$ and $n$ \emph{negative}. From this and the same considerations with the roles of $U$ and
    $U^{-}$ reversed, it follows that $s\rho(\Fr)$ is avoidant at depth $d$ for all but finitely many $s \in S_\kappa$. In
    particular, $\rho$ is in the closure of $\Xf_{M,\kappa}^{d,a}$ as required.
  \item Let $Z$ be the locally closed subscheme of $\Xf^{d}_G$ given by
    \[Z= \{\rho \in \Xf^{d}_G : \rho(\Fr) \in M^{d,a}\}.\]
    \begin{enumerate}
    \item We first show that the morphism $G^\circ \times Z \to \Xf^d_G$ sending $(\gamma, \rho)$ to
      $\gamma\rho\gamma^{-1}$ is smooth. This map is the pullback of the conjugation map
      $c : G^\circ \times M^{d,a} \to G$ along the projection $\Xf^d_G \to G$ sending $\rho$ to $\rho(\Fr)$. It is
      enough to show that $c$ is smooth. Since $G^\circ \times M^{d,a}$ and $G$ are smooth over $\Z[1/p]$, it is enough
      (by the fibrewise criterion of smoothness, \cite[Proposition 17.8.2]{EGAIV-4}, together with \cite[Proposition
      17.7.1 and Théorème 17.11.1(d)]{EGAIV-4}) to show that the derivative $Dc$ is surjective on geometric tangent spaces. Since
      the map $c$ is equivariant for the action of $G^\circ$ we may check this at a point
      $(e, m) \in (G^\circ \times M^{d,a})(\bar{\kappa})$ for an algebraically closed field $\kappa$. Identify the
      tangent spaces of $G^\circ_\kappa \times M^{d,a}_\kappa$ (resp.\ $G_\kappa$) at $(e,m)$ (resp.\ $m$) with
      $\Lie G_\kappa \oplus \Lie M_\kappa$ (resp.\ $\Lie G_\kappa$) via left multiplication by $(e,m)$ (resp.\ $m$). A
      computation shows that for $(X, Y) \in \Lie G_\kappa \oplus \Lie M_\kappa$
      \[Dc(X,Y) = (\ad_{m}^{-1} - 1)X + Y,\]
      so that $Dc$ is surjective by the assumption that $m$ is avoidant at depth $d$.
    \item We must show that the closed immersion $\iota : \Xf_M^{d,a} \to Z$ is an isomorphism. Since
      $\Xf_G$ is $\Z[1/p]$-flat and reduced, $Z$ also has these properties. Since $\iota$ is a closed immersion of
      $\Z[1/p]$-flat schemes, it is enough to show that $\iota \otimes_{\Z[1/p]} \Q$ is an isomorphism.
    \item Since $Z \otimes_{\Z[1/p]} \Q$ is reduced and of finite type over $\Q$, it is enough to show that
      $\iota\otimes \bar{\Q}$ is a
      bijection on closed points. Thus we must show that, if $\rho \in Z(\bar{\Q})$, then $\rho \in
      \Xf^d_M(\bar{\Q})$. In other words, if $\rho$ is an $L$-parameter with $\rho(\Fr) \in M^{d,a}(\bar{\Q})$, then
      we also have $\rho(\sigma) \in M(\bar{\Q})$ and $\rho(g) \in M(\bar{\Q})$ for all $g \in P_F/P_F^d$.

      Let $\rho$ be such a parameter, and let $\rho(\sigma) = (\Sigma_s, \sigma) \Sigma_u$ be the Jordan decomposition
      of $\rho(\sigma)$ (we must have that $\rho(\sigma)_u \in G^\circ(\bar{\Q})$ since $\bar{\Q}$ has characteristic
      zero). 
    \item We show that $\Sigma_u \in M^\circ(\bar{\Q})$. Write $\Sigma_u = \exp(N)$ for $N \in \Lie(G)(\bar{\Q})$. Then (by
      uniqueness of Jordan decomposition) we have $\ad_{\rho(\Fr)}(N) = qN$. By our assumption that $\rho(\Fr)$ is
      avoidant at depth $d$, we have
      $N \in \Lie(M)(\bar{\Q})$ and so $\Sigma_u \in M^\circ(\bar{\Q})$.
    \item We next show that $\Sigma_s \in M^\circ(\bar{\Q})$. Let $r$ be as above. Then
      \[\rho(\Fr)^r(\Sigma_s, \sigma)\rho(\Fr)^{-r} = (\Sigma_s, \sigma)^{q^r} = (\Sigma_s,\sigma),\]
      which may be rewritten as
      \[(\Sigma_s)^{-1}\rho(\Fr)^r \Sigma_s = {}^\sigma \rho(\Fr)^r.\]
      Applying Lemma~\ref{lem:ab-conjugate}, we see that $\Sigma_s \in M^\circ(\bar{\Q})$ as required.
    \item Finally, we show that $\rho(g)\in M(\bar{\Q})$ for $g \in P_F/P_F^d$. This is identical to the previous step:
      if $\pi = \rho^\circ(g)$ we obtain \[\pi^{-1} \rho(\Fr)^r \pi = {}^{g}\rho(\Fr)^r\]
      and therefore that $\pi \in M^\circ(\bar{\Q})$.
    \end{enumerate}

  \item It follows from part 2 that $\bar{c}$ is smooth of relative dimension
    \[ \dim (G^\circ \times^{M^\circ} \Xf_M^a) - \dim \Xf_G = \dim G^\circ - \dim M^\circ + \dim \Xf_M -
      \dim \Xf_G = 0,\]
    by Theorem~\ref{thm:dhkm}, and hence \'{e}tale.
  \end{enumerate}
\end{proof}
\begin{remark}
  In fact, $\bar{c}$ descends further to a map $G^\circ \times^{N_{G^{\circ}}(M^\circ)} \Xf_M^a \to \Xf_G$ which one can
    show is an open immersion. This implies that $\bar{c}$ is a torsor under
    \[N_{G^\circ}(M^\circ)/M^\circ \cong N_W(W_{M^\circ})/W_{M^\circ}
    \]
    (see \cite{malleLinearAlgebraicGroups2011} Corollary 12.11 for this isomorphism).
  \end{remark}
  \begin{remark}\label{rem:base-change-2}
    The formation of $\Xf_M^a$ is compatible with base change, as is the property of fibrewise density, and so we obtain
    Theorem~\ref{thm:levi} over any $\Z[1/p]$-algebra $\Lambda$.
    \end{remark}
\section{(Non-)Examples}
  \label{sec:examples}
  In all the following examples we consider unramified $G$ and tamely ramified $L$-homomorphisms only. If $\kappa$ is an
  algebraically closed field of characteristic distinct from $p$, then the irreducible components of $\Xf_{G,\kappa}$ are in
  bijection with the $G^\circ(\kappa)$-conjugacy classes of pairs $(\Sigma, C)$, where $\Sigma \in G^\circ(\kappa)$ and $C$
  is an element of
  \[\pi_0(\{\Phi \in G^\circ_\kappa : \Phi {}^{\Fr}\Sigma\Phi^{-1} = \Sigma^q\}).\]

  If we fix $\Sigma$ and fix $\Phi_0$ with $\Phi_0\Sigma\Phi_0^{-1} = \Sigma^q$, then by Corollary~\ref{cor:irr-cpts} the equivalence classes of pairs
  $(\Sigma, C)$ are in bijection with the $\Ad_{\Phi_0}$-twisted conjugacy classes in $\pi_0(C_{G^\circ}(\Sigma))$.  We call
  the corresponding components the components of type $\Sigma$.

  Let $\Lambda = W(\bar{\F}_l)$; in the rest of this section, we consider everything over $\Lambda$. Our main interest is in
  describing the local geometry of $\Xf_{G}$ around generic points of components of the special fibre. By
  \cite{datModuliLanglandsParameters2020} section~4.2, it is possible to reduce the general case to that of tame
  parameters such that the semisimple part of $\Sigma$ has $l$-power order --- and so, modulo $l$, $\Sigma$ is
  unipotent. We therefore focus on such components in the discussion below. We call the irreducible components of
  $\Xf^\tm_{G,\bar{\F}_l}$ on which $\Sigma$ is unipotent the \emph{unipotent components}. We have the morphism
  $\Ch : \Xf_G^\tm \to (T \git W)^{\Fr^{-1}[q]}$ and the unipotent components are those in the preimage of the identity
  $\bar{e} \in (T\git W)^{\Fr^{-1}[q]}(\bar{\F}_l)$.

\subsection{$GL_n$}

Since centralizers are connected in $G = GL_n$, the irreducible unipotent components of $\Xf_{G, \bar{\F}_l}$ correspond
to unipotent conjugacy classes in $GL_n$, which are parametrized by partitions of $n$ giving the Jordan canonical form
of $\Sigma$. For every conjugacy class $[\Sigma]$, we can find a representative such that $\Sigma$ is regular in a
standard Levi subgroup $M$ (the one corresponding to the same partition of $n$). By Theorems~\ref{thm:levi}
and~\ref{thm:sigma-reg}, we have open subsets $U \subset \Xf_{M}$ and $V \subset \Xf_{G}$, each containing the generic
point of the component corresponding to $\Sigma$, and smooth morphisms
\[G \times U \to V\] 
and
\[U \to (T\git W_M)^{[q]}.\] In particular, the completed local ring of $\Xf_G$ at a point in $V(\bar{\FF}_l)$ will be
formally smooth over $(T\git W_M)_{\bar{e}}^{[q]}$. We therefore recover the geometric results of
\cite{shottonGenericLocalDeformation2022} in this case (and there is no need to assume that $\Sigma$ is unipotent
here).

  \subsection{$SL_2$}
\label{sec:sl2}
Suppose that $G = SL_2$ (with $\Gamma$ trivial). There are two conjugacy classes of unipotent $\Sigma$.

If $\Sigma = \twomat{1}{1}{0}{1}$, then we may take $\Phi_0 = \twomat{q}{0}{0}{1}$. We have
$\pi_0(C_{G^\circ}(\Sigma)) = \pi_0(Z(SL_2)) = \pi_0(\mu_2)$, and so there are two components of $\Xf_{\bar{\FF}_l}$ of type $\Sigma$
unless $l = 2$, in which case there is only one. If $l \ne 2$, then there is an open subset $U$ of $\Xf_G$ intersecting
each of these components such that the map
\[U \to (T\git W)^{[q]} = \Spec \ZZ_l[x + x^{-1}]/((x^q + x^{-q} - (x + x^{-1}))]\]
is smooth.
  
If $\Sigma = \twomat{1}{0}{0}{1}$, then there is only one component $C_{nr}$ of type $\Sigma$. There is an open subset
$V$ of $\Xf^\tm_{T} \cong \mu_{q-1} \times \GG_m$ such that the morphism $G \times V \to \Xf_G$ is smooth and
has image intersecting $C_{nr}$ (necessarily in an open subset).

\subsection{Unitary groups}

We take $G$ to be the $L$-group of an unramified quasisplit unitary group over $F$. Thus,
\[G = G^\circ \rtimes \Gal(E/F)\]
where $G^\circ = GL_n$, the extension $E/F$ is unramified quadratic, and the nontrivial element $c \in \Gal(E/F)$ acts on $G^\circ$ as
\[{}^cg = Jg^{-T}J^{-1}\] for $J$ an antidiagonal matrix of alternating $1$s and $-1$s. Then $c$ preserves the standard
maximal torus $T$ and Borel $B$ in $G^\circ$.

Since centralizers in $G^\circ$ are connected, the irreducible components of $\Xf_{G,\bar{\F}_l}$ are in bijection with
$G^\circ(\bar{\F}_l)$-conjugacy classes of $\Sigma \in G^\circ(\bar{\F}_l)$ such that ${}^c\Sigma \sim \Sigma^q$.  In
particular, the unipotent irreducible components are in bijection with the unipotent conjugacy classes in $G^\circ(\bar{\F}_l) \cong
GL_n(\bar{\F}_l)$, since ${}^c \Sigma \sim \Sigma$ for $\Sigma$ unipotent.

Note that $\Fr$ acts on $T\git W$ in the same way as the map induced by $t \mapsto t^{-1}$. We therefore have a formally
smooth morphism
$\Xf_G^{\reg} \to (T\git W)^{[-q]}$, by Theorem~\ref{thm:sigma-reg}.

However, it is no longer true that every unipotent conjugacy class of $G^\circ$ contains an element regular in a
standard Levi subgroup of $G$. Indeed, such Levi subgroups must be preserved by $c$, and so we only get partitions of
$n$ for which all except at most one part occurs an even number of times.

For example, when $n = 3$, the unipotent components of $\Xf_G$ to which Theorem~\ref{thm:levi} applies are those
corresponding to the partitions $(1,1,1)$ and $(3)$; we do not obtain a description of the local geometry of $\Xf_G$
about points for which $\Sigma$ is unipotent of type $(2,1)$.

\subsection{$GSp_4$}
\label{sec:gsp4}
Let $G = \GSp_4$, with $l > 2$. We take the symplectic form to be that given by the matrix
\[J = \begin{pmatrix} & & & 1 \\ & & 1 & \\ & -1 & & \\ -1 & & &\end{pmatrix}.\] We describe the unipotent components of
$\Xf_{\bar{\FF}_l}$. The conjugacy class of a unipotent matrix $u \in \GSp_4(\bar{\F}_l)$ is determined by the rank of $u-1$. The
centralizer of $u$ is connected unless the rank of $u-1$ is two, in which case the centralizer has two connected
components (see the table on p400 of \cite{carterFiniteGroupsLie1985}). We label the unipotent components by the rank of $\Sigma - I$ at
a generic point, so that they are $C_0$, $C_1$, $C_{2A}$, $C_{2B}$, and $C_3$.
  
Firstly, on the unramified component $C_{0}$ we have $\Sigma = I$. We may apply Theorem~\ref{thm:levi} to this component,
with the Levi subgroup being the standard maximal torus, to see that there is an \'{e}tale neighbourhood $U$ of the
generic point of $C_0$ such that $U$ is smooth over $T^{[q]} \cong \mu_{q-1}^3$.

Secondly, we have the component $C_1$ on which $\Sigma - I$ has rank $1$ (generically). There is a unique such component
since the centralizer of any $\Sigma$ such that $\Sigma - I$ has rank 1 is connected. Letting $M \cong GL_2\times \GG_m$ be the Levi
subgroup of block diagonal matrices with block sizes 1, 2, 1, there is an \'{e}tale neighbourhood $U$ of the generic
point of $C_1$ such that $U$ is smooth over $(T\git W_M)^{[q]}$. 

Thirdly, we have the component $C_3$ on which $\Sigma - I$ has rank $3$ (generically). There is an open neighbourhood
$U$ of the generic point of $C_3$ that is smooth over $(T\git W)^{[q]}$. 

Lastly, consider
  \[u = \begin{pmatrix}
  1 &1  & 0 & 0 \\ 0 & 1 & 0 & 0 \\ 0 & 0 & 1 & -1 \\ 0 & 0 & 0 & 1
\end{pmatrix}.\] Then $C_G(u)$ has two components. The Frobenius-twisted conjugacy classes of
$\pi_0(C_G(u)) \cong \Z/2$ are in this case just the conjugacy classes and there are therefore two components of
$\Xf_u$. One of them, which we call $C_{2A}$, contains the points with $\Sigma = u$ and
  \[\rho(\Fr) = \begin{pmatrix}
  \lambda q &0  & 0 & 0 \\ 0 & \lambda & 0 & 0 \\ 0 & 0 & q & 0 \\ 0 & 0 & 0 & 1
\end{pmatrix},\]
 while the other, which we call $C_{2B}$, contains the points with $\Sigma = u$ and
\[\rho(\Fr) = \begin{pmatrix}
    0 &0 & -\lambda q & 0 \\ 0 & 0 & 0 & \lambda \\ -q & 0 & 0 & 0 \\ 0 & 1 & 0 & 0
  \end{pmatrix}.\]

The component $C_{2A}$ may be understood by applying Theorem~\ref{thm:levi} with the Levi subgroup $M \cong
GL_2\times_{\GG_m}GL_2$ consisting
of block diagonal matrices with block sizes (2, 2), and we see that an \'{e}tale neighourhood $U$ of the generic point of
$C_{2A}$ will be smooth over $(T\git W_M)^{[q]}_1$.  The component $C_{2B}$, however, falls outside the scope of
our results.

\begin{remark}
  We can show by explicit calculation that, if $q \equiv -1 \bmod l$, then any open neighbourhood of the generic point
  of $C_{2B}$ intersects the components of the generic fibre of type $C_{2B}$ \emph{and} $C_3$, so that no statement
  like~\ref{thm:levi} can be true.

  If $q \equiv 1 \bmod l$ and $l \ge 5$, then one can actually prove a statement like Theorem~\ref{thm:levi} for the
  component $C_{2B}$ with $M$ taken
  to be the pseudo-Levi subgroup (in the sense of \cite{mcninchComponentGroupsUnipotent2003}) 
  \begin{align*}M &= Z_{\GSp_4}(\Phi_0) \\
    \intertext{where}
    \Phi_0 &= \begin{pmatrix}
  1 &0  & 0 & 0 \\ 0 & -1 & 0 & 0 \\ 0 & 0 & 1 & 0 \\ 0 & 0 & 0 & -1
\end{pmatrix}.\end{align*}
For this, we choose $\lambda = 1$ and make a change of basis such that $\rho(\Fr)= \Phi_0$ and
  \[\Sigma = \begin{pmatrix}
  1 &0  & 1 & 0 \\ 0 & 1 & 0 & 1 \\ 0 & 0 & 1 & 0 \\ 0 & 0 & 0 & 1
\end{pmatrix} \in M.\]
In this case, we can show that an \'{e}tale neighbourhood of $C_{2B}$ is smooth over $(T\git W_M)^{[q]}$. Is there a
more general picture involving pseudo-Levi subgroups when $q \equiv 1 \mod l$ (and $l$ is sufficiently large)?
\end{remark}
\subsection{Distinguished unipotent elements}
\label{sec:gsp6}

The Bala--Carter theorem (see \cite{premetNilpotentOrbitsGood2003}) says that, in good characteristic, every
unipotent element of a reductive group is a \emph{distinguished} unipotent element of a Levi subgroup. For $G = GL_n$,
the notions of distinguished and regular coincide, but in general they do not.

Concretely, suppose that $G = \GSp_6$, $l > 2$, and that
\[\Sigma = \begin{pmatrix} 1 & 1 &  & & & \\
    & 1 & & & 1 & \\
    & & 1 & 1 & &  \\
    & & & 1 & & \\
    & & & & 1 & -1 \\ & & & & & 1\end{pmatrix}.\] Then there is a single component of type $\Sigma$. This $\Sigma$ is
distinguished unipotent, so is contained in no proper Levi subgroup of $G$, but is not regular.\footnote{Indeed, it has Jordan blocks of shape
$(4,2)$ but no proper Levi subgroup of $G$ contains elements of this shape.}  This component therefore
falls outside the scope of our results, and we do not understand the deformation rings at generic points of these
components (except for \emph{minimal} deformations as in \cite{booherMinimallyRamifiedDeformations2019}).

\begin{remark}
  If $q \equiv 1 \mod l$ then the trivial representation lies on the component of type $\Sigma$. In particular, to apply
  the Ihara avoidance method of \cite{Taylor2008-AutomorphyII} to $\GSp_{2n}$, $n > 2$, one would need to understand
  something about deformations of representations of type $\Sigma$. For $\GSp_4$, where Ihara avoidance was worked out
  in section 7.4.5 of \cite{boxerAbelianSurfacesTotally2021}, the issue does not arise, as $C_{2B}$ does not contain the
  trivial representation.
\end{remark}

\bibliography{references.bib}{}
\bibliographystyle{amsalpha}
\end{document}